\newtheorem{theorem}{Theorem}[section]
\newtheorem{definition}{Definiton}[section]
\newtheorem{lemma}{Lemma}[section]
\newtheorem{proposition}{Proposition}[section]
\newtheorem{corollary}{Corollary}[section]
\theoremstyle{remark}
\newtheorem{remark}{Remark}[section]
\begin{document}
\title[Some rigidity phenomenons for Lagrangian submanifolds]
{On energy gap phenomena of the Whitney sphere and related problems}

\author{Yong Luo, Jiabin Yin}
\address{Mathematical Science Research Center, Chongqing University of Technology, Chongqing, 400054, China}
\email{yongluo-math@cqut.edu.cn}
\address{School of Mathematics and Statistics,Guangxi Normal University, Guilin, 541001, Guangxi, China}
\email{jiabinyin@126.com}

\thanks{2010 {\it Mathematics Subject Classification.} Primary 53C24; Secondary 53C42.}
\thanks{The authors were supported by NSF of China, Grant Numbers 11501421, 11771404. The authors would like to thank Professor Zejun Hu for his useful suggestions which made this paper more readable. Many thanks to Dr. Xiuxiu Cheng for her interest and careful reading of the first version of this manuscript.}

\date{}

\keywords{Lagrangian submanifolds; The Whitney sphere; Energy gap theorems; Conformal Maslov form.}

\begin{abstract}
In this paper, we study Lagrangian submanifolds satisfying ${\rm \nabla^*} T=0$ introduced by Zhang \cite{Zh} in the complex space forms $N(4c)(c=0\ or \ 1)$,
where $T ={\rm \nabla^*}\tilde{h}$ and $\tilde{h}$ is the Lagrangian trace-free second fundamental
form. We obtain some Simons' type integral inequalities and rigidity theorems for such  Lagrangian submanifolds. Moreover we study Lagrangian submanifolds in $\mathbb{C}^n$ satisfying $\nabla^*\nabla^*T=0$ and introduce a flow method related to them.
\end{abstract}

\maketitle

\numberwithin{equation}{section}
\section{Introduction}\label{sect:1}
In \cite{LW}, inspired by the study of Hamiltonian minimal submanifolds firstly introduced and studied by Oh \cite{Oh} and existence of minimizers of the  Willmore functional among compact Lagrangian tori in $\mathbb{C}^2$ proved by Minicozzi \cite{Mi}, Luo and Wang considered Lagrangian surfaces which are stationary points of the Willmore functional among Lagrangian surfaces or their Hamiltonian isotopy classes in $\mathbb{C}^2$, where such Lagrangian surfaces are briefly denoted by LW or HW surfaces respectively. Note that by their definition, the class of  HW surfaces is larger than that of LW surfaces, and for HW surfaces they proved the following energy gap theorem.
\begin{theorem}[\cite{LW}]
There exists a constant $\epsilon_0>0$ such that
if $\Sigma\hookrightarrow{\mathbb{C}^2}$ is a properly immersed HW surface with $\|h\|_{L^2(\Sigma)}<\epsilon_0$, where $h$ is the second fundamental form of the immersion,  then $\Sigma$ is a Lagrangian plane.
\end{theorem}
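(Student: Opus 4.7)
My plan is to exploit the Euler--Lagrange equation for HW surfaces to derive a Simons--type differential inequality for $|h|^2$, and then couple this inequality with the Michael--Simon Sobolev inequality and the smallness hypothesis on $\|h\|_{L^2(\Sigma)}$ to force $h$ to vanish identically.

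The first step is to write down the Euler--Lagrange equation. Because HW surfaces are critical points of the Willmore functional under Hamiltonian variations only, the equation does not decouple as cleanly as for Willmore surfaces; however, the Lagrangian identity $h_{ijk} = h_{jik} = h_{ikj}$ (full symmetry of the second fundamental form on a Lagrangian submanifold of a K\"ahler manifold) reduces the equation, after using Codazzi and Ricci, to a fourth-order condition of the form $\nabla^{*}T = 0$ on a quantity built from $\tilde{h}$ and $H$ (indeed this is the viewpoint the present paper takes, cf.\ the abstract). Combined with the classical Simons identity for $\Delta |h|^2$, this yields a pointwise estimate of the schematic form
\[
|h|\,\Delta |h| + C\,|h|^4 \;\ge\; |\nabla h|^2 - |\nabla |h||^2,
\]
in which the constant $C$ is universal.

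The second step is a cutoff-and-integrate argument to handle non-compactness. Fix a smooth compactly supported cutoff $\eta$ on $\Sigma$, multiply the above inequality by $\eta^2$ and integrate by parts to obtain
\[
\int_\Sigma |\nabla h|^2\,\eta^2\,d\mu \;\le\; C\!\int_\Sigma |h|^4 \eta^2\,d\mu \;+\; C\!\int_\Sigma |h|^2 |\nabla \eta|^2\,d\mu.
\]
The first term on the right is then controlled by H\"older and the Michael--Simon Sobolev inequality applied to the function $|h|\eta$: in dimension two,
\[
\Big(\int_\Sigma (|h|\eta)^{2}\,d\mu\Big) \;\le\; C\Big(\int_\Sigma |\nabla(|h|\eta)|\,d\mu + \int_\Sigma |H|\,|h|\eta\,d\mu\Big)^{2},
\]
which combined with H\"older yields $\int |h|^4\eta^2 \le C\,\|h\|_{L^2(\Sigma)}^2 \int (|\nabla h|^2 + |h|^2|H|^2)\eta^2\,d\mu + \text{lower order}$. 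Since $|H|\le |h|$, when $\|h\|_{L^2(\Sigma)} < \epsilon_0$ with $\epsilon_0$ chosen so that $C\epsilon_0^2 < \tfrac12$, the $|h|^4$ term is absorbed into the left-hand side.

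The third step is to let the cutoff exhaust $\Sigma$. Because the immersion is proper one can choose $\eta$ depending only on extrinsic distance with $|\nabla\eta|$ as small as desired on an exhaustion, and the boundary term $\int |h|^2 |\nabla \eta|^2\,d\mu$ tends to zero. We conclude $\int_\Sigma |\nabla h|^2 = 0$ and $\int_\Sigma |h|^4 = 0$, hence $h\equiv 0$. A complete, properly immersed, totally geodesic Lagrangian surface in $\mathbb{C}^2$ is a Lagrangian plane, finishing the proof.

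The main obstacle will be the first step: extracting a genuinely elliptic Simons-type inequality from the restricted (Hamiltonian) Euler--Lagrange equation. Unlike the LW case, where one may test against all Lagrangian variations, the HW equation is only known to hold against Hamiltonian vector fields, and one must carefully exploit the fact that on a Lagrangian surface the normal bundle is isomorphic to the tangent bundle (via $J$), so Hamiltonian test functions are sufficient to recover a full fourth-order PDE for the immersion up to controlled lower-order terms.
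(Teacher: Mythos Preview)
This theorem is not proved in the present paper: it is quoted from \cite{LW} (Luo--Wang) as background and motivation, and no argument for it appears anywhere in the body. There is therefore nothing in the paper to compare your proposal against.

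That said, a few comments on the proposal itself. The overall architecture (Simons-type identity, multiply by a cutoff and integrate, control the quartic term via the Michael--Simon Sobolev inequality, absorb using the smallness of $\|h\|_{L^2}$, then exhaust) is exactly the Kuwert--Sch\"atzle template and is almost certainly what \cite{LW} does. Your Steps~2 and~3 are fine in outline; in particular the boundary term $\int |h|^2|\nabla\eta|^2$ does go to zero because $\|h\|_{L^2(\Sigma)}<\infty$ by hypothesis and $|\nabla\eta|\le C/R$.

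There is, however, a genuine conceptual slip in Step~1. You write that the HW Euler--Lagrange equation ``reduces \ldots\ to a fourth-order condition of the form $\nabla^*T=0$''. The paper is explicit that these are \emph{different} classes of surfaces: Lagrangian submanifolds with $\nabla^*T=0$ are introduced (following Zhang) as a separate object of study, and the paper remarks that it is rather $\nabla^*\nabla^*T=0$ that ``should have closer relationship with HW surfaces''. The HW equation comes from the Willmore functional under Hamiltonian variations and is a genuinely different fourth-order PDE; it is not the divergence of $T$. So the reduction you sketch is not available, and the obstacle you flag at the end of your proposal is not merely technical but substantive: you would need to go back to the actual HW Euler--Lagrange equation from \cite{LW} and derive the differential inequality from that, not from $\nabla^*T=0$.
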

Gap phenomena for Willmore surfaces in a Euclidean space was firstly obtained by Kuwert and Sh\"atzler \cite{KS}, which is crucial in their proof of long time existence and convergence of the Willmore flow started from a compact surface with small energy. The study of gap theorems for HW surfaces was also motivated by long time behavior of the HW flow, a higher order flow defined by Luo and Wang \cite{LW}. But unfortunately, it seems hard to get certain energy gap theorems for compact HW surfaces by well known methods in the literature. It is natural to study the following
\\
\\\textbf{Problem 1:} Study certain energy gap phenomena for compact HW surfaces in $\mathbb{C}^2$.
\\

To explain what kind of energy gap theorems we may expect for compact HW surfaces we should mention several aspects on the Whitney spheres in $\mathbb{C}^n$, which are defined by
\begin{eqnarray*}
\phi_{r,A}: {\mathbb{S}^n}&{\to}& {\mathbb{C}^n}
 \\(x_1,...,x_{n+1})&\mapsto& \frac{r}{1+x_{n+1}^2}(x_1,x_1x_{n+1},...,x_n,x_nx_{n+1})+A,
\end{eqnarray*}
where $\mathbb{S}^n=\{(x_1,...,x_{n+1})\in {\mathbb{R}^{n+1}}|x_1^2+...+x_{n+1}^2=1\},$ $r$ is a positive number and $A$ is a vector of $\mathbb{C}^n$.
Since a compact Lagrangian sphere in $\mathbb{C}^2$ can not be embedded \cite{Gr}, by Li-Yau's inequality \cite{LY} for the Willmore energy of surfaces we see that the Willmore energy of Lagrangian spheres in $\mathbb{C}^2$ must be larger than or equal to $8\pi$, which is achieved by the (2-dimensional)Whitney spheres $\phi_{r,A}$. Carstro and Urbano \cite{CU} proved the Whitney spheres $\phi_{r,A}$ are the unique Lagrangian spheres with Willmore energy equal to $8\pi$, i.e. the Whitney spheres are the unique minimizers of the Willmore functional among Lagrangian spheres in $\mathbb{C}^2$. Note that the existence of minimizers of Willmore functional among Lagrangian tori in $\mathbb{C}^2$ was previously proved by Minicozzi \cite{Mi} mentioned above, by a direct variational method in spirit of Simon \cite{Sim}, where the assumption of embeddedness is necessary in their argument. Castro and Urbano \cite{CU2} also proved that the (2-dimensional)Whitney spheres $\phi_{r,A}$ are the unique Willmore Lagrangian spheres in $\mathbb{C}^2.$

The following 2-tensor, which is called the Lagrangian trace-free second fundamental form, plays an important role in the theory of Lagrangian submanifolds in $\mathbb{C}^n$(or more generally a complex space form).
\begin{eqnarray}
\tilde{h}(V,W):=h(V,W)-\frac{n}{n+2}\{\langle V, W\rangle H+\langle JV, H\rangle JW+\langle JW, H\rangle JV\},
\end{eqnarray}
where $h$ denotes the second fundamental form, $H=\frac{1}{n}h$ denotes the mean curvature vector field, $J$ and $\langle , \rangle$ denote the canonical complex structure and metric on $\mathbb{C}^n$ (or more generally a complex space form) respectively. Another important fact about the Whitney spheres $\phi_{r, A}$, proved by Castro and Urbano \cite{CU} for surfaces and Ros and Urbano \cite{RU} in general dimensions, is that the vanishing of $\tilde{h}$ for Lagrangian submanifolds in $\mathbb{C}^n$ implies that they must be an open part of totally geodesic Lagrangian submanifolds or the Whitney spheres $\phi_{r,A}$. This shows that the tensor $\tilde{h}$ plays a similar role with the trace-free second fundamental form of hypersurfaces in a Euclidean space and the Whitney spheres $\phi_{r,A}$ plays a role in $\mathbb{C}^n$ like that of round spheres in a Euclidean space.

Motivated by results and characterizations of the Whitney spheres $\phi_{r,A}$ mentioned above, one may naturally study the following
\\
\\\textbf{Problem 2:} Study certain energy gap phenomena for the Whitney spheres $\phi_{r,A}$.
\\

Partially motivated by problems 1 and 2 above, recently Zhang \cite{Zh} initiated the study of a new kind of Lagrangian surfaces in $\mathbb{C}^2$, satisfying the equation $\nabla^*T=0$, where $T$ is a (0, 2) tensor defined on $n$-dimensional Lagrangian submanifolds by (note that definitions of $T$ and $H$ in our paper differ from that of $T$ and $H$ in \cite{Zh} by a constant in the coefficients)
\begin{eqnarray}
T:=\frac{1}{n+2}\big(n\nabla (H\lrcorner \omega)-div(JH)g\big).
\end{eqnarray}
Zhang characterized properly immersed complete surfaces in $\mathbb{C}^2$ satisfying $\nabla^*T=0$ to be Lagrangian planes or the (2-dimensional)Whitney spheres $\phi_{r,A}$ if the $L^2$ energy of $\tilde{h}$ is small enough and the $L^2$ energy of $h$ decays like $o(\rho^2)$ on inverse image of balls of radius $\rho$ in $\mathbb{C}^2$ centered at the zero point.

Note that Lagrangian submanifolds in a complex space form with $T=0$ are called Lagrangian submanifolds with conformal Maslov form, which are Lagrangian counterparts of hypersurfaces in a real space form with constant mean curvature. This class of submanifolds in Lagrangian geometry was proposed and systematically studied in the 1990's by Castro, Montealegre, Ros, and Urbano \cite{CU, CMU, RU}. Later, Chao and Dong \cite{CD} obtained a Simons' type integral inequality for submanifolds with conformal Maslov form in a complex space form in $N^{n}(4c)(c=0\ or \ 1)$ and characterized the Whitney spheres as follows.
\begin{theorem}[\cite{CD}]\label{CD}
Let $M^n\hookrightarrow N^{n}(4c)(c=0\ or\ 1)(n\geq 2)$ be a compact (non-minimal) Lagrangian submanifold with conformal Maslov form (i.e. $T=0$). Then
$$\int_M^n|\tilde h|^2[|\tilde h|^2-\tfrac{4c(n+1)}{3(n+2)}-\tfrac{4n^2| H|^2}{3(n+2)^2}]d\nu\geq0.$$ If
\begin{eqnarray}\label{ass1}
|\tilde h|^2\leq\tfrac{4c(n+1)}{3(n+2)}+\tfrac{4n^2| H|^2}{3(n+2)^2},
\end{eqnarray}
then $M^n$ is the Whitney spheres $\phi_{r,A}$ when $c=0$ or the Whitney spheres $\phi_\theta$ when $c=1$.
\end{theorem}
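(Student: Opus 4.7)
The plan is to establish the integral inequality via a Simons/Bochner identity for the Lagrangian trace-free tensor $\tilde h$, and then to extract the rigidity conclusion by combining the equality case with the Castro-Urbano/Ros-Urbano characterization of $\tilde h\equiv 0$ recalled in the introduction.

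I would fix a local orthonormal tangent frame $\{e_i\}$ on $M^n$ and work with the fully symmetric $(0,3)$-tensor $h_{ijk}=\langle h(e_i,e_j),Je_k\rangle$, whose trace-free part $\tilde h_{ijk}$ is symmetric in all three indices and trace-free in every pair. A standard computation using the Codazzi and Ricci equations for a Lagrangian submanifold of $N^n(4c)$ gives a Simons-type identity of the schematic form
\begin{equation*}
\tfrac12\Delta|\tilde h|^2 = |\nabla\tilde h|^2 + \langle\tilde h,\,P(\nabla^2 H)\rangle + R_1(\tilde h) + R_2(\tilde h,H,c),
\end{equation*}
where $P(\nabla^2 H)$ is a symmetrized Hessian of the mean curvature vector, $R_1(\tilde h)$ is cubic in $\tilde h$ coming from the Gauss equation, and $R_2(\tilde h,H,c)$ is quadratic in $\tilde h$ with coefficients built from $c$ and $|H|^2$. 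The hypothesis $T=0$ is tailor-made to remove the Hessian cross term after integration: $T=0$ reads $n\nabla(H\lrcorner\omega)=\mathrm{div}(JH)\,g$, so the symmetric covariant derivative of $(JH)^\flat$ is a pure multiple of the metric, and differentiating once more and pairing against the totally trace-free $\tilde h$ kills the $g$-contribution while turning the remainder into a total divergence. Integrating over the compact $M^n$ therefore reduces the identity to
\begin{equation*}
\int_{M^n}\!\Big(|\nabla\tilde h|^2 + R_1(\tilde h) + R_2(\tilde h,H,c)\Big)\,d\nu = 0.
\end{equation*}

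The decisive step, and the one I expect to be the main obstacle, is a sharp pointwise algebraic inequality controlling $R_1(\tilde h)$ by $|\tilde h|^3$ (with $R_2$ treated in terms of $|\tilde h|^2$) so that the integrand is bounded below by $|\tilde h|^2\bigl[|\tilde h|^2-\tfrac{4c(n+1)}{3(n+2)}-\tfrac{4n^2|H|^2}{3(n+2)^2}\bigr]$ up to a nonnegative Kato-type remainder absorbed into $|\nabla\tilde h|^2$. This is the Lagrangian analogue of the Chern-do Carmo-Kobayashi estimate $|A^3|\le\tfrac{n-2}{\sqrt{n(n-1)}}|A|^3$ for trace-free shape operators of hypersurfaces, but adapted to a fully symmetric three-tensor; reaching the sharp constants $\tfrac{4}{3}$, $\tfrac{n+1}{n+2}$ and $\tfrac{n^2}{(n+2)^2}$ requires diagonalizing $\tilde h$ in one index and running a Lagrange-multiplier analysis of the resulting cubic polynomial on the unit sphere in the space of totally symmetric trace-free tensors.

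Once the sharp pointwise bound is secured, inserting it into the integrated identity above yields the stated Simons-type integral inequality directly. Under the pointwise hypothesis (\ref{ass1}) the nonnegative integrand must vanish identically; the equality case of the algebraic inequality, together with the non-minimality assumption that excludes the totally geodesic branch, forces $\tilde h\equiv 0$ on $M^n$, and the Castro-Urbano/Ros-Urbano characterization recalled in the introduction then identifies $M^n$ with the Whitney sphere $\phi_{r,A}$ when $c=0$ and with $\phi_\theta$ when $c=1$, completing the proof.
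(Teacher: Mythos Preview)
The paper does not actually prove Theorem~\ref{CD}; it is quoted from \cite{CD} and then superseded by the paper's own Theorem~\ref{thm:1.1} (for $n\ge 3$) and Theorem~\ref{thm:1.2} (for $n=2$), whose proofs occupy Section~\ref{sect:3}. Your overall architecture---derive a Simons identity for $\tfrac12\Delta|\tilde h|^2$, use the hypothesis on $T$ to turn the $\nabla^2H$ cross-terms into divergences, integrate, and then invoke the Castro--Urbano/Ros--Urbano/Chen classification---matches the paper's scheme exactly; see Lemma~\ref{lem:3.2} and equations \eqref{eqn:3.12}--\eqref{eqn:3.16}.

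Where you diverge is in the algebraic estimate. You propose a Chern--do Carmo--Kobayashi style Lagrange-multiplier analysis on the space of totally symmetric trace-free $3$-tensors. The paper instead writes $\tilde h$ as a tuple of symmetric matrices $A_{i^*}=(\tilde h^{i^*}_{jk})$, applies the Li--Li inequality (Lemma~\ref{lem:3.3}) to bound $-\sum {\rm tr}(A_{i^*}A_{j^*}-A_{j^*}A_{i^*})^2+\sum({\rm tr}A_{i^*}A_{j^*})^2\le\tfrac32|\tilde h|^4$, and then handles the mixed $\tilde h^3H$ and $\tilde h^2H^2$ terms by choosing the frame so that $\sum_l\tilde h^{l^*}_{ij}H^{l^*}$ is diagonal and completing a square (see \eqref{eqn:3.17}). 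This packaged approach is cleaner than a direct optimisation on $3$-tensors and, crucially, yields the coefficient $\tfrac{n+3}{2}$ in front of $|\tilde h|^4$ rather than the $\tfrac{3(n+2)}{4}$ implicit in \cite{CD}.

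One substantive correction: you call the constants $\tfrac{4}{3}\cdot\tfrac{n+1}{n+2}$ and $\tfrac{4n^2}{3(n+2)^2}$ ``sharp''. They are not. The paper's Theorem~\ref{thm:1.1} produces the strictly larger thresholds $\tfrac{2(n+1)}{n+3}$ and $\tfrac{2n^2}{(n+3)(n+2)}$ under the weaker hypothesis $\nabla^*T=0$, and the remark after Corollary~\ref{cor2} explicitly notes that this improves Theorem~\ref{CD}. So any Lagrange-multiplier argument you carry out should aim for (at least) those constants; if it only reproduces the Chao--Dong numbers, it is leaving something on the table. Also, your decomposition ``$R_1$ cubic in $\tilde h$, $R_2$ quadratic'' is slightly off: the Gauss-equation contribution contains genuinely quartic terms in $\tilde h$ (the $A_{i^*}A_{j^*}$ commutator and trace-square terms), and it is these, not a cubic, that drive the $|\tilde h|^4$ in the final inequality.
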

Here the Whitney spheres $\phi_\theta$ in $N^n(4)=\mathbb{CP}^n$ are defined by (see (\cite{CMU}))
\begin{eqnarray*}
\phi_\theta: {\mathbb{S}^n}&{\to}& {\mathbb{CP}^n}, \theta>0
 \\(x_1,...,x_{n+1})&\mapsto& [(\frac{(x_1,...,x_n)}{ch\theta+ish\theta x_{n+1}}, \frac{sh\theta ch\theta(1+x^2_{n+1})+ix_{n+1}}{ch^2_\theta+sh^2_\theta x^2_{n+1}})].
\end{eqnarray*}
The Whitney spheres $\phi_\theta$ were characterized when $n=2$ by Castro and Urbano (\cite{CU1}) and general $n$ by Chen (\cite{Ch}) as the unique Lagrangian submanifolds in $\mathbb{CP}^n$ satisfying $\tilde{h}=0$. There are also Whitney spheres in $N^n(-4)=\mathbb{CH}^n$ and Lagrangian submanifolds in $\mathbb{CH}^n$ satisfying $\tilde{h}=0$ were also completely classified \cite{CU1}\cite{Ch}, where besides the totally geodesic submanifolds and the Whitney spheres, two new families of noncompact examples appeared.

In this paper, firstly we will improve and extend theorem \ref{CD} to Lagrangian submanifolds in complex space form $N^n(4c)(c=0 \ or\ 1)$ satisfying $\nabla^*T=0$.
\begin{theorem}\label{thm:1.1}
Let $M^n\hookrightarrow N^{n}(4c)(c=0\ or\ 1)(n\geq 3)$ be a compact Lagrangian submanifold with $\nabla^*T=0$. Then there holds the Simons' type integral inequality
\begin{equation}\label{eqn:1.03}
\int_M^n|\tilde h|^2\Big[|\tilde h|^2-\tfrac{2c(n+1)}{(n+3)}-\tfrac{2n^2}{(n+3)(n+2)}|H|^2\Big]d\nu\geq0.
\end{equation}
Moreover, equality in \eqref{eqn:1.03} holds if and only if $\tilde{h}=0$ and one of the following alternatives holds:
\begin{enumerate}
\item $c=0$, $M^n$ is the Whitney spheres $\phi_{r,A}$;
 \item $c=1$,   $M^n$ is either totally geodesic or the Whitney spheres $\phi_\theta$.
 \end{enumerate}
\end{theorem}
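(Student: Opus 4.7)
The plan is to adapt the Bochner--Simons method that Chao--Dong \cite{CD} used to prove Theorem \ref{CD}, paying the price of one extra integration by parts to absorb the weaker assumption $\nabla^*T=0$ (in place of $T=0$). The first step is to translate the geometry into symmetric-tensor calculus. Using the Lagrangian condition together with the Codazzi equation in $N^n(4c)$, I would identify $\tilde h$ with the totally symmetric $(0,3)$-tensor $\tilde S_{ijk}=\langle \tilde h(e_i,e_j),Je_k\rangle$ on $M$; trace-freeness becomes $\tilde S_{iij}=0$, and, up to a universal constant, $T_{kl}=-\nabla_i\tilde S_{ikl}$, so the hypothesis reads $\nabla_k\nabla_i\tilde S_{ikl}=0$.

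The second step is the Bochner identity. A direct computation in a complex space form of holomorphic curvature $4c$ gives
\begin{equation*}
\tfrac12\Delta|\tilde h|^2=|\nabla\tilde h|^2+\langle\tilde h,\nabla^2 H\rangle+\langle\tilde h,\mathcal{Q}(\tilde h,H,c)\rangle,
\end{equation*}
where $\nabla^2H$ collects the Simons-type second-derivative-of-$H$ contribution and $\mathcal{Q}$ assembles the algebraic terms that will eventually produce the two subtracted quantities in \eqref{eqn:1.03}. Integrating over the compact $M$ kills the left-hand side. I would then rewrite the middle term by two integrations by parts, producing a sum of $\int_M|T|^2\,d\nu$ and terms of the form $\int_M\langle T,\nabla(\cdot)\rangle\,d\nu$; the latter vanish by $\nabla^*T=0$, and the nonnegative piece $\int_M|T|^2$ may be absorbed into the gradient side.

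The third step is the algebraic estimate of $\langle\tilde h,\mathcal{Q}(\tilde h,H,c)\rangle$. This is the cubic-form inequality for totally symmetric, partially trace-free tensors --- the Lagrangian analogue of Simons' hypersurface inequality --- which, combined with the Gauss equation for $N^n(4c)$, yields the precise coefficients $\tfrac{2c(n+1)}{n+3}$ and $\tfrac{2n^2}{(n+3)(n+2)}$ and completes \eqref{eqn:1.03}. Choosing these constants rather than merely nonnegative ones requires careful bookkeeping: the factor $\tfrac{1}{n+2}$ built into the definition of $T$ propagates through the extra integration by parts and must combine cleanly with the coefficients appearing from the Simons identity. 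This is the main obstacle and the place where the weakening of $T=0$ to $\nabla^*T=0$ is visibly reflected in the denominator $n+3$ replacing $3(n+2)/2$.

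Finally, for the equality case each nonnegative integrand appearing in the resulting identity must vanish, forcing $T=0$ and, through the algebraic equality in the cubic estimate, $\tilde h\equiv 0$. At this point the classification of Lagrangian submanifolds of $N^n(4c)$ with $\tilde h=0$ due to Ros--Urbano \cite{RU} and Chen \cite{Ch} identifies $M$ as either totally geodesic or a Whitney sphere; compactness rules out the totally geodesic option when $c=0$, giving the two alternatives in the statement.
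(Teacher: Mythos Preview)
Your plan matches the paper's: Simons-type Bochner identity for $\tfrac12\Delta|\tilde h|^2$, integration over compact $M$, and an algebraic estimate on the curvature terms. Two places need tightening.

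First, the $\int_M|T|^2$ term cannot be ``absorbed into the gradient side.'' After one integration by parts the Simons identity produces $-n(n+2)\int_M|T|^2$, with a negative sign and a coefficient too large to be dominated by $\int_M|\nabla\tilde h|^2$ (Cauchy--Schwarz only gives $n(n+2)|T|^2\le (n+2)|\nabla\tilde h|^2$). The paper's device (their Lemma~\ref{lem:3.2}) is instead to use the pointwise identity $|T|^2=\tfrac{n}{n+2}\langle T,\nabla(JH)\rangle$, which follows directly from the definition \eqref{eqn:3.8} and the tracelessness of $T$, and to integrate by parts once more; this converts $\int_M|T|^2$ into $-\tfrac{n}{n+2}\int_M\langle\nabla^*T,JH\rangle$, which vanishes by hypothesis. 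So the $T$-contribution is not absorbed but is exactly zero, and the term $\int_M|\nabla\tilde h|^2$ is simply discarded at the end.

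Second, the ``cubic-form inequality'' you invoke is the heart of the computation and must be made explicit; this is also where the constant $\tfrac{2}{n+3}$ and the restriction $n\ge3$ originate. The paper splits the curvature contribution into a pure quartic part in $\tilde h$, bounded via the Li--Li matrix inequality \cite{LiLi} by $-\tfrac32|\tilde h|^4$, and cubic/quadratic cross terms involving $H$. The latter are handled by diagonalizing $\sum_l\tilde h^{l^*}_{ij}H^{l^*}=\lambda_i\delta_{ij}$ and completing the square, $n\sum_i\lambda_iS_{i^*}+\tfrac{n^2}{n+2}\sum_i\lambda_i^2\ge -\tfrac{n}{2}\sum_iS_{i^*}^2\ge-\tfrac{n}{2}|\tilde h|^4$, so that the combined coefficient is $\tfrac{3}{2}+\tfrac{n}{2}=\tfrac{n+3}{2}$, yielding \eqref{eqn:1.03}. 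For the equality case it is precisely the rigidity statement in Li--Li for $n\ge3$ that forces $\tilde h\equiv0$ (this is why the theorem is stated only for $n\ge3$); your appeal to Ros--Urbano and Chen thereafter is correct.
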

As a direct consequence of  theorem \ref{thm:1.1}, we have
\begin{corollary}\label{cor2}
Let $M^n\hookrightarrow N^{n}(4c)(c=0\ or\ 1)(n\geq 3)$ be a compact Lagrangian submanifold with $\nabla^*T=0$. If
\begin{eqnarray}\label{ass2}
|\tilde h|^2\leq\tfrac{2c(n+1)}{(n+3)}+\tfrac{2n^2}{(n+3)(n+2)}|H|^2,
\end{eqnarray}
then $\tilde h=0$, and one of the following alternatives holds:
\begin{enumerate}
\item $c=0$, $M^n$ is  the Whitney spheres $\phi_{r,A}$;
 \item $c=1$,   $M^n$ is either totally geodesic or the Whitney spheres $\phi_\theta$.
 \end{enumerate}
\end{corollary}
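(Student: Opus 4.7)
The plan is to adapt the Simons--Chao--Dong method of Theorem \ref{CD} while exploiting only the weaker hypothesis $\nabla^{*}T=0$ through one extra integration by parts. The starting point is the Bochner--Simons identity
$$\tfrac{1}{2}\Delta|\tilde h|^{2}=|\nabla\tilde h|^{2}+\langle\tilde h,\Delta\tilde h\rangle,$$
where $\Delta\tilde h$ is expanded via the Codazzi and Ricci equations of a Lagrangian immersion into $N^{n}(4c)$. This produces four kinds of contributions: a Hessian-of-$H$ term of shape $\nabla^{2}H\ast\tilde h$; constant-curvature terms proportional to $c\,\tilde h$; quartic terms $\tilde h\ast\tilde h\ast\tilde h$; and mixed terms pairing $H$ with $\tilde h\ast\tilde h$. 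Compactness of $M$ makes the integral of the left-hand side vanish.

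The crucial new step, absent from Chao--Dong, is to rewrite the Hessian-of-$H$ contribution using the identity $T=\nabla^{*}\tilde h$. Two successive integrations by parts give
$$\int_{M}\langle\tilde h,\nabla^{2}H\rangle\, d\nu=\int_{M}\langle\nabla^{*}T,H\rangle\, d\nu=0,$$
while a single integration by parts converts $\int_M|T|^{2}\, d\nu=\int_M|\nabla^{*}\tilde h|^{2}\, d\nu$ into a bilinear form in $\tilde h$ and $\nabla\tilde h$. Comparing these two identities together with a refined Kato-type inequality isolates the non-negative gradient combination $|\nabla\tilde h|^{2}-\tfrac{n+1}{n+3}|T|^{2}$, which is precisely the source of the denominator $(n+3)$ in the stated inequality.

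For the remaining pointwise quartic $\tilde h$-terms and the mixed $\tilde h$--$H$ terms, one applies the algebraic Ros--Urbano inequality bounding $\sum_{m}(\tilde h_{ij}^{m}\tilde h_{kl}^{m})^{2}$ by a scalar multiple of $|\tilde h|^{4}$, together with a Cauchy--Schwarz estimate on the mixed terms producing the $|H|^{2}|\tilde h|^{2}$ coefficient. Collecting all contributions with the correct weights yields \eqref{eqn:1.03}.

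For the equality case, tracing back shows that both the refined Kato inequality and the algebraic Ros--Urbano inequality must hold with equality, which forces $\tilde h\equiv 0$. Once $\tilde h=0$, the classifications of Castro--Urbano (\cite{CU, CU1}), Ros--Urbano (\cite{RU}) and Chen (\cite{Ch}) yield the two alternatives listed. The main technical obstacle is the bookkeeping: the two integration-by-parts steps together with the Kato inequality must assemble so that the $\nabla H$ and $\nabla^{2}H$ contributions cancel exactly against the $|\nabla\tilde h|^{2}$ term, with only $\nabla^{*}T=0$ (not the stronger $T=0$) available as a cancellation mechanism; fine-tuning the weights to obtain the sharp constants $\tfrac{2c(n+1)}{n+3}$ and $\tfrac{2n^{2}}{(n+3)(n+2)}$ is where the subtlety lies.
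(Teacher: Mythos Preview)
Your proposal is really a sketch of Theorem~\ref{thm:1.1} rather than of the corollary itself; in the paper the corollary is immediate once the integral inequality \eqref{eqn:1.03} is available, since the pointwise hypothesis \eqref{ass2} makes the integrand nonpositive and forces equality. That reduction is fine, but your account of how \eqref{eqn:1.03} is obtained misidentifies the two decisive technical steps.

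First, the paper does \emph{not} use any refined Kato-type inequality, and the denominator $(n+3)$ does not arise from a comparison of $|\nabla\tilde h|^{2}$ with $|T|^{2}$. In the paper's argument $|\nabla\tilde h|^{2}$ is simply discarded as nonnegative. The Hessian-of-$H$ contribution is rewritten, via Lemma~\ref{lem:3.2}, as a sum of divergence terms plus the single term $n^{2}\sum_{i,j}H^{i^{*}}T_{ij,j}$, which vanishes exactly under $\nabla^{*}T=0$; there is no step where $\int_{M}|T|^{2}$ is extracted or compared to $\int_{M}|\nabla\tilde h|^{2}$.

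Second, the algebraic estimate is not a ``Ros--Urbano inequality'' together with Cauchy--Schwarz on the mixed terms. The paper applies the Li--Li matrix lemma (Lemma~\ref{lem:3.3}) to the quartic $\tilde h$-terms, giving a loss of $\tfrac{3}{2}|\tilde h|^{4}$. The mixed $\tilde h$--$H$ terms are then handled by diagonalising the symmetric matrix $\bigl(\sum_{l}\tilde h^{l^{*}}_{ij}H^{l^{*}}\bigr)$, completing the square in the resulting eigenvalues $\lambda_{i}$ and the partial norms $S_{i^{*}}$, and using $\sum_{i}S_{i^{*}}^{2}\le(\sum_{i}S_{i^{*}})^{2}=|\tilde h|^{4}$. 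This last step costs an additional $\tfrac{n}{2}|\tilde h|^{4}$, and the combination $\tfrac{3}{2}+\tfrac{n}{2}=\tfrac{n+3}{2}$ is the true source of the $(n+3)$ denominator. Your outline does not contain this diagonalisation/completion-of-squares argument, and a bare Cauchy--Schwarz on the mixed terms will not recover the sharp constant. Likewise, the equality analysis for $n\ge 3$ hinges on the rigidity statement in Li--Li's lemma, which your sketch does not invoke.
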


Note that our assumption of \eqref{ass2} is weaker than that of \eqref{ass1}, hence corollary \ref{cor2} is stronger than theorem \ref{CD} in the case $T=0$.
Moreover when $n=2$, we have the following optimal Simons' type integral inequalities.
\begin{theorem}\label{thm:1.2}
Let $\Sigma\hookrightarrow  N^2(4c)(c=0 \ or \ 1)$ be a compact Lagrangian surface with $\nabla^* T=0$. Then we have
\begin{equation*}
 \int_{\Sigma} | \tilde{h} |^2( |\tilde{ h} |^2-2- | H |^2)d\nu\geq0,
\end{equation*}
and equality holds if and only if one of the following alternatives holds:
\begin{enumerate}
\item $c=0$, $\Sigma$ is the (2-dimensional)the Whitney spheres $\phi_{r,A}$ or $\phi_{0,\alpha},\alpha\in (0,\pi]$;
\item $c=1$, $\Sigma$ is either totally geodesic , or the (2-dimensional)Whitney spheres $\phi_\theta$ or  the Calabi tori.
\end{enumerate}
\end{theorem}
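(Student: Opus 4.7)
The strategy mirrors the proof of Theorem \ref{thm:1.1} but exploits identities special to surfaces in order to sharpen the constants. I would begin with the Bochner-type formula
$$\tfrac12\Delta|\tilde h|^2=|\nabla\tilde h|^2+\langle\tilde h,\Delta\tilde h\rangle$$
and compute $\Delta\tilde h$ via the Codazzi and Ricci identities. The expression for $\langle\tilde h,\Delta\tilde h\rangle$ decomposes into an algebraic Simons-type quartic in $\tilde h$, a curvature term proportional to $c|\tilde h|^2$, mixed terms involving $|H|^2|\tilde h|^2$, and a term involving derivatives of $T$, which vanishes upon integration by parts on the compact $\Sigma$ using $\nabla^* T=0$.

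The core observation in dimension two is that the Lagrangian trace-free second fundamental form, encoded as the totally symmetric trace-free cubic tensor $\tilde C_{ijk}:=\langle\tilde h(e_i,e_j),Je_k\rangle$, has only two independent components at each point (parametrized, in an isothermal frame, by the real and imaginary parts of a cubic differential). This algebraic rigidity collapses the Simons-type quartic to a sharp pointwise scalar identity in place of the Cauchy--Schwarz-type bound used in general dimension, yielding the optimal coefficients $2$ and $1$ rather than the weaker $\tfrac{2c(n+1)}{n+3}=\tfrac{6c}{5}$ and $\tfrac{2n^2}{(n+3)(n+2)}=\tfrac25$ that would result from specializing Theorem \ref{thm:1.1} to $n=2$.

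Combining this pointwise identity with the Bochner formula and integrating over $\Sigma$ yields the Simons-type inequality. For the equality discussion one splits according to whether $\tilde h\equiv0$: if so, the classifications of Castro--Urbano \cite{CU,CU1}, Ros--Urbano \cite{RU} and Chen \cite{Ch} produce the Whitney spheres $\phi_{r,A}$ or $\phi_\theta$, the totally geodesic examples, or the noncompact family giving $\phi_{0,\alpha}$; if not, the equality forces $|\tilde h|^2=2+|H|^2$ on the support of $\tilde h$ together with a rigidity condition on $\nabla\tilde h$, and combined with $\nabla^* T=0$ this overdetermined system is satisfied precisely by the Calabi tori in $\mathbb{CP}^2$.

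The main obstacle is establishing the sharp pointwise algebraic identity for the quartic Simons term with the optimal constants $(2,1)$ independent of $c$: this step genuinely uses that the fiber of totally symmetric trace-free cubic tensors on a $2$-manifold is two-dimensional and has a canonical diagonal form at each point, and the analogous argument fails for $n\ge 3$, which is what forces the weaker constants in Theorem \ref{thm:1.1}. A secondary difficulty lies in the equality analysis, since one must solve the overdetermined pointwise equation $|\tilde h|^2=2+|H|^2$ coupled with $\nabla^* T=0$ and recognize the resulting surfaces as exactly the Calabi tori (when $c=1$) and the family $\phi_{0,\alpha}$ (when $c=0$), rather than as spurious extra solutions.
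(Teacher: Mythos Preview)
Your overall strategy---Bochner formula plus Simons-type commutation, then integrate using $\nabla^*T=0$---matches the paper. The paper's key simplification in dimension two, however, is more direct than your cubic-differential argument: once the formula \eqref{eqn:3.12} is in hand, one observes that for $n=2$ the Riemann tensor is $R_{ijkl}=K(\delta_{ik}\delta_{jl}-\delta_{il}\delta_{jk})$, so the three curvature terms $I+II+III$ collapse identically to $3K|\tilde h|^2$, and then the Gauss equation gives the pointwise identity $K=c+\tfrac12(|H|^2-|\tilde h|^2)$. Integrating yields
\[
0=\int_\Sigma\bigl(|\nabla\tilde h|^2+3K|\tilde h|^2\bigr)d\nu\ge \int_\Sigma \tfrac32(2c+|H|^2-|\tilde h|^2)|\tilde h|^2\,d\nu,
\]
which is the sharp inequality. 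No separate algebraic lemma on the space of trace-free cubics is needed.

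Your equality analysis contains genuine errors. First, equality forces $\nabla\tilde h\equiv 0$ \emph{and} $K|\tilde h|^2\equiv 0$; since $\nabla\tilde h=0$ makes $|\tilde h|^2$ constant, the nontrivial branch is $\nabla\tilde h=0$ and $K\equiv 0$, not merely ``$|\tilde h|^2=2c+|H|^2$ on the support of $\tilde h$''. Second, you have misplaced the examples: the $\phi_{0,\alpha}$ are \emph{compact flat tori} (not a noncompact family) and they arise in the $K=0$, $\nabla\tilde h=0$ branch for $c=0$, not in the $\tilde h=0$ branch. Third, and most importantly, you are missing the step that actually pins down this branch: from $\nabla^*T=0$ one computes $(\nabla^*T)_i=\tfrac14(\Delta(JH)_i+KH^{i^*})$, so $K=0$ forces $\Delta(JH)=0$, and integrating $\tfrac12\Delta|H|^2$ over the closed surface yields $\nabla JH=0$. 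Thus $\Sigma$ is flat with parallel second fundamental form \emph{and} parallel mean curvature, and one then invokes the classifications of Castro--Urbano (for $c=0$, giving $\phi_{0,\alpha}$) and Luo--Sun / Haskins (for $c=1$, giving the Calabi tori). Without deriving $\nabla JH=0$ you cannot reach these classification results.
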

As a direct consequence of the theorem \ref{thm:1.2}, we have
\begin{corollary}\label{cor1}
Let $\Sigma\hookrightarrow N^{2}(4c)(c=0 \ or \ 1)$ $(n\geq 2)$ be a compact Lagrangian submanifold with $\nabla^*T=0$. If
 $|\tilde h|^2\leq2c+|H|^2$, then one of the following alternatives holds:
\begin{enumerate}
\item $c=0$, $\Sigma$ is the (2-dimensional)Whitney spheres $\phi_{r,A}$ or $\phi_{0,\alpha},\alpha\in(0,\pi]$;
\item $c=1$, $\psi(\Sigma)$ is either totally geodesic , or the (2-dimensional)Whitney spheres $\phi_\theta$ or  the Calabi tori.
\end{enumerate}
\end{corollary}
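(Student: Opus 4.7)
The plan is to derive Corollary \ref{cor1} as an immediate consequence of Theorem \ref{thm:1.2}, via a standard sandwiching argument that forces equality in the Simons' type integral inequality.

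First I would extract a pointwise sign statement from the hypothesis. Since $c\in\{0,1\}$, the bound $|\tilde h|^2\le 2c+|H|^2$ trivially upgrades to $|\tilde h|^2\le 2+|H|^2$ everywhere on $\Sigma$ (the case $c=1$ is identical, the case $c=0$ only strengthens it). Multiplying by the nonnegative factor $|\tilde h|^2$ gives
\[
|\tilde h|^2\bigl(|\tilde h|^2-2-|H|^2\bigr)\le 0\qquad\text{pointwise on }\Sigma,
\]
and integrating over the compact surface yields
\[
\int_\Sigma |\tilde h|^2\bigl(|\tilde h|^2-2-|H|^2\bigr)\,d\nu\le 0.
\]

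Next I would apply Theorem \ref{thm:1.2}, which supplies the reverse inequality
$\int_\Sigma |\tilde h|^2(|\tilde h|^2-2-|H|^2)\,d\nu\ge 0$.
Combining the two estimates shows that this integral vanishes. Since the integrand is nonpositive by the previous step, it must vanish identically on $\Sigma$, so equality is achieved in Theorem \ref{thm:1.2} both pointwise and in the integrated form. Invoking the classification of the equality case in Theorem \ref{thm:1.2} then forces $\Sigma$ to coincide with one of the surfaces listed in alternatives (1) or (2), according to whether $c=0$ or $c=1$, completing the proof.

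The main (indeed, only) point of care is the comparison of the coefficient $2c$ in the hypothesis with the coefficient $2$ appearing in Theorem \ref{thm:1.2}; once one observes that $2c\le 2$ for $c\in\{0,1\}$, the rest is pure sign-chasing. There is no serious obstacle, because all of the analytic content—the Simons' type identity for surfaces with $\nabla^\ast T=0$, the ensuing integral inequality, and the detailed characterization of the equality case involving the Whitney spheres $\phi_{r,A}$, $\phi_{0,\alpha}$, $\phi_\theta$, totally geodesic surfaces, and the Calabi tori—has already been absorbed into Theorem \ref{thm:1.2}.
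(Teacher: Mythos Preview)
Your argument is correct and matches the paper's own: Corollary~\ref{cor1} is introduced there simply as ``a direct consequence of'' Theorem~\ref{thm:1.2}, and your sandwiching of the integral is precisely the intended derivation. One minor remark: the proof of Theorem~\ref{thm:1.2} actually establishes the inequality with coefficient $2c$ rather than $2$ (via $K=c+\tfrac12(|H|^2-|\tilde h|^2)$), so your weakening step $2c\le 2$ is unnecessary and the hypothesis $|\tilde h|^2\le 2c+|H|^2$ plugs in directly.
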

\begin{remark}
$\phi_{0,\alpha}, \alpha\in (0,\pi]$ appearing in the statement of theorem \ref{thm:1.2} and corollary \ref{cor1} are examples of compact Lagrangian tori in $\mathbb{C}^2$ which were constructed and characterized by Castro and Urbano \cite{CU} to be the only compact flat Lagrangian tori in $\mathbb{C}^2$ with conformal Maslov form. In particular $\phi_{0,\pi}$ is the well known Clifford torus. The Calabi tori are compact Lagrangian tori in $\mathbb{CP}^2$ stated in appendix of \cite{LS}, which are characterized by Luo and Sun to be the only nonminimal compact Lagrangian surfaces in $\mathbb{CP}^2$ with parallel mean curvature vector field. Our classification result relies on their results.
\end{remark}
\begin{remark}
Similar integral inequality and characterization result were recently obtained by the first author \cite{Luo1,Luo2} for another kind of Lagrangian surfaces satisfying a higher order geometric partial differential equation, that is CSL surfaces in $\mathbb{S}^5$(or H-minimal surfaces in $\mathbb{CP}^2$). Here CSL  surfaces in $\mathbb{S}^5$ are Legendrian surfaces in $\mathbb{S}^5$ which are stationary points of the volume functional among Legendrian surfaces and H-minimal surfaces in $\mathbb{CP}^2$ are Lagrangian surfaces in $\mathbb{CP}^2$ which are stationary points of the volume functional among their Hamiltonian isotopy classes.
\end{remark}
Secondly, the energy gap theorem for Lagrangian surfaces in $\mathbb{C}^2$ satisfying $\nabla^*T=0$ due to Zhang was mainly inspired by the study of long time behavior of the HW flow defined by Luo and Wang \cite{LW}. It is also very natural to consider a flow related to Lagrangian submanifolds satisfying $\nabla^*T=0$. But unfortunately we can not find a proper way to achieve this. This  suggests us to consider a more general class of Lagrangian submanifolds in a symplectic manifold satisfying $\nabla^*\nabla^*T=0$. From the  point of view of partial differential equations, Lagrangian surfaces in $\mathbb{C}^2$ satisfying $\nabla^*\nabla^*T=0$ should have closer relationship with HW surfaces defined in \cite{LW}. Furthermore, unlike  HW surfaces, we can obtain an energy gap theorem for compact Lagrangian spheres in $\mathbb{C}^2$ satisfying $\nabla^*\nabla^*T=0$.
 \begin{theorem}\label{main thm3}
 Assume that $\Sigma\hookrightarrow\mathbb{C}^2$ is a Lagrangian sphere satisfying $\nabla^*\nabla^*T=0$. Then there exists a constant $\epsilon_0>0$ such that if
 \begin{eqnarray*}
 \int_{\Sigma}|\tilde{h}|^2d\nu\leq\epsilon_0,
 \end{eqnarray*}
 then $\Sigma$ is the (2-dimensional)Whitney spheres $\phi_{r,A}$.
 \end{theorem}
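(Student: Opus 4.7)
The endgame is the Castro--Urbano/Ros--Urbano characterization \cite{CU,RU}: a compact Lagrangian sphere in $\mathbb{C}^2$ with $\tilde h\equiv 0$ is necessarily a Whitney sphere $\phi_{r,A}$. The plan is therefore to exhibit $\epsilon_0>0$ such that the hypotheses $\nabla^*\nabla^* T=0$ and $\int_\Sigma|\tilde h|^2\,d\nu\le\epsilon_0$ force $\tilde h\equiv 0$, at which point \cite{CU} concludes the proof.

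The first step is to downgrade the higher-order scalar hypothesis $\nabla^*\nabla^* T=0$ into an $L^2$-control of $\nabla^* T$. Since $\Sigma$ is closed and $\nabla^*\nabla^* T$ is a scalar, I pair this equation against a scalar test function built from $\tilde h$ (for instance $|\tilde h|^2$, or a suitable contraction of $\tilde h$ against itself), integrate by parts twice, and aim for an inequality of the schematic form
\[
\int_\Sigma |\nabla^* T|^2\,d\nu \le C\int_\Sigma \bigl(|\tilde h|^4+|\tilde h|^2|\nabla\tilde h|^2+|\tilde h|^2|H|^2|\nabla\tilde h|\bigr)\,d\nu.
\]
This is the structural substitute, under the weaker hypothesis here, for the identity $\nabla^* T=0$ used in Theorem~\ref{thm:1.2}. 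The algebraic identity $T=\nabla^*\tilde h$ recorded in the abstract should be the key ingredient that makes this integration by parts close, since it re-expresses $\nabla^* T$ entirely in terms of second covariant derivatives of $\tilde h$, so that only $\tilde h$ and $H$ appear on the right-hand side.

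The second step is to revisit the Simons--Bochner computation behind Theorem~\ref{thm:1.2}, this time keeping the error terms that are linear in $\nabla^* T$. Combining the resulting Bochner identity for $|\tilde h|^2$ with Step 1 should produce, schematically,
\[
\int_\Sigma |\nabla \tilde h|^2\,d\nu + \int_\Sigma |\tilde h|^2\bigl(2+|H|^2-|\tilde h|^2\bigr)\,d\nu \le \int_\Sigma \mathcal E(\tilde h,\nabla\tilde h,H)\,d\nu,
\]
where $\mathcal E$ is quartic-or-higher in $\tilde h$ and its derivatives. Feeding this into the Michael--Simon Sobolev inequality in the two-dimensional surface form used by Kuwert--Sch\"atzle \cite{KS}, together with H\"older, bounds the right-hand side by $C\,\|\tilde h\|_{L^2}^2$ times the left-hand side (modulo absorbable remainders). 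Choosing $\epsilon_0$ so that $C\epsilon_0<1$ then forces $\nabla\tilde h\equiv 0$ and hence $\tilde h\equiv 0$.

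The main obstacle lies in Steps 1 and 2 together: engineering the right test function so that the two integrations by parts produce exactly the cross-terms needed to cancel the extra error $\mathcal E$ on the right-hand side of the Simons identity, and keeping the constants sharp enough for the final absorption to succeed. If one could only establish $\int|\nabla^* T|^2\le C\|\tilde h\|_{L^2}^2$ with no further structural information, the argument would not close; the content lies in matching the structure of $\mathcal E$ with the structure imposed on $\nabla^*\nabla^* T=0$ through the identity $T=\nabla^*\tilde h$.
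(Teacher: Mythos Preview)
Your proposal has a genuine gap: it never uses the hypothesis that $\Sigma$ is a sphere, and this topological input is precisely what makes the argument close.

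Here is the actual structure. Zhang's estimate (which you are in effect re-deriving in Step~2) yields, once the cut-off is sent to $1$ on the compact $\Sigma$,
\[
\int_\Sigma\bigl(|\nabla\tilde h|^2+|H|^2|\tilde h|^2\bigr)\,d\nu\;\le\;C\int_\Sigma\langle\nabla^*T,\,H\lrcorner\omega\rangle\,d\nu
\]
whenever $\int_\Sigma|\tilde h|^2\,d\nu$ is small. The cross-term on the right is \emph{not} $\int|\nabla^*T|^2$; it is the specific pairing of the one-form $\nabla^*T$ against the Maslov form $H\lrcorner\omega$. Your Step~1 is aimed at the wrong quantity.

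The decisive observation is topological. By Dazord \cite{Da}, $H\lrcorner\omega$ is a closed one-form on any Lagrangian submanifold of $\mathbb C^n$. Since $\Sigma$ is a sphere, $H^1(\Sigma;\mathbb R)=0$, so $H\lrcorner\omega=df$ for some $f\in C^\infty(\Sigma)$. Then
\[
\int_\Sigma\langle\nabla^*T,\,H\lrcorner\omega\rangle\,d\nu
=\int_\Sigma\langle\nabla^*T,\,df\rangle\,d\nu
=\int_\Sigma f\,(\nabla^*\nabla^*T)\,d\nu=0
\]
by the hypothesis. The cross-term therefore vanishes outright, the left-hand side is zero, $\tilde h\equiv 0$, and Castro--Urbano \cite{CU} gives the Whitney sphere.

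Your Step~1, as written, cannot deliver what you claim. Pairing the scalar $\nabla^*\nabla^*T$ against a scalar test function $f$ built from $\tilde h$ and integrating by parts twice produces $\int_\Sigma(\mathrm{Hess}\,f):T\,d\nu=0$, which carries no obvious lower bound by $\int|\nabla^*T|^2$. The single scalar equation $\nabla^*\nabla^*T=0$ constrains only the divergence of the one-form $\nabla^*T$, not its $L^2$ norm. The content of the sphere hypothesis is exactly that the \emph{particular} one-form $H\lrcorner\omega$ against which $\nabla^*T$ is paired in Zhang's identity happens to be exact, so one integration by parts lands directly on the hypothesis; no absorption argument of the Kuwert--Sch\"atzle type is needed at that stage.
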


More importantly, we can introduce a flow method to study Lagrangian submanifolds in $\mathbb{C}^n$ satisfying $\nabla^*\nabla^*T=0$, and the well posedness of this flow can be verified similarly with that of HW flow in \cite{LW}.
\begin{theorem}\label{flow}
Let $M^n$ be a closed Lagrangian submanifold in $\mathbb{C}^n$. The flow
\begin{equation}\label{DCMF}
\left\{\begin{array}{rcl}
\frac{\partial M^n_t}{\partial_t} &=& -J\nabla\nabla^*\nabla^*T, \\
M^n_t|_{t=0}&=&M^n.
\end{array}\right.
\end{equation}
is well-posed. Namely, there exists a $T_0>0$ and an unique family of  Lagrangian surfaces $M^n_t$, $t\in [0, T_0)$, satisfying
\eqref{DCMF}.
\end{theorem}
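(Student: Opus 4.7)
The plan is to follow the standard strategy for short-time existence of higher-order geometric evolution equations, adapted to the Lagrangian setting as in Luo--Wang \cite{LW}. The key observation is that $-J\nabla\nabla^*\nabla^*T$ is a Hamiltonian vector field with Hamiltonian function $-\nabla^*\nabla^*T$, so the evolution \eqref{DCMF} is purely normal to $M^n_t$ and automatically preserves the Lagrangian condition; it therefore suffices to construct the flow as a family of Lagrangian submanifolds.

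First I would invoke Weinstein's Lagrangian neighborhood theorem to identify a $C^k$-neighborhood of $M^n$ in the space of Lagrangian submanifolds of $\mathbb{C}^n$ with a neighborhood of zero in $C^\infty(M^n)$, via $u \mapsto \phi_u$ with $\phi_u(x) = \exp_x(J\nabla u(x))$. Matching the normal parts of $\partial_t\phi_u$ and $-J\nabla\nabla^*\nabla^*T(\phi_u)$ yields the scalar quasilinear PDE
\begin{equation*}
\partial_t u = -\nabla^*\nabla^*T(u) + (\text{lower order terms in }u)
\end{equation*}
on the closed manifold $M^n$. The operator $\nabla^*\nabla^*T$ is of order six in $u$: since $\phi_u$ depends on $u$ through its first derivatives, $h$ and $H$ are of order three in $u$, $T$ is of order four, and two further divergences yield order six.

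The central analytic step is to verify that this equation is strongly parabolic. A direct linearization at $u \equiv 0$ on a totally geodesic Lagrangian (where the Christoffel symbols vanish to leading order) gives $H \sim J\nabla\Delta u$ and
\begin{equation*}
(n+2)\,T_{ij}(u) \sim -n\,\partial_i\partial_j\Delta u + \Delta^2 u\,\delta_{ij} + (\text{lower order}),
\end{equation*}
whence a short symbol computation yields that the principal symbol of $-\nabla^*\nabla^*T$ equals $-\tfrac{n-1}{n+2}|\xi|^6$, strictly negative for $n \geq 2$. Thus the equation is strongly parabolic of order six, and the standard theory of quasilinear higher-order parabolic equations on closed Riemannian manifolds (via maximal regularity or the Eidel'man--Solonnikov framework) produces a unique smooth solution $u \in C^\infty(M^n \times [0,T_0))$ for some $T_0 > 0$. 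Setting $M^n_t := \phi_{u(\cdot,t)}(M^n)$ then yields the required family. The main technical obstacle is the bookkeeping in the principal-symbol computation, in particular isolating the genuine sixth-order part of $\nabla^*\nabla^*T$ from the many lower-order commutator and curvature terms; as in \cite{LW}, however, this is a mechanical calculation once the Weinstein parameterization is fixed.
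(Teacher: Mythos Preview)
Your proposal is correct and follows the same strategy as the paper: reduce via Weinstein's tubular neighborhood theorem to a scalar sixth-order equation, identify the principal part as $-\tfrac{n-1}{n+2}\Delta^3$ (the paper extracts this coefficient by passing through the Lagrangian angle $\theta$ and the graph formula $\theta_k=g^{ij}\varphi_{ijk}$, whereas you linearize directly), and then invoke standard higher-order parabolic theory (Huisken--Polden in the paper, maximal regularity in your version). The two routes to the principal symbol are equivalent, and the overall argument is the same.
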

The proof of theorem \ref{flow} follows the argument developed in the proof of the well posedness of the HW flow due to Luo and Wang \cite{LW}, by observing that such a flow could be rewritten in the cotangent bundle of its initial submanifold by using the famous Weinstein's tubular neighborhood theorem in symplectic geometry. Then we prove that it is equivalent to a parabolic six-order scalar flow.

We would like to point out that the well posedness of the flow \eqref{DCMF} can be proved when the target manifold is K\"ahler Einstein similarly with the $\mathbb{C}^n$ case, without extra difficulty.

The rest of this paper is organized as follows. In section 2 we give some preliminaries on Lagrangian submanifolds in a complex space form, including basic properties of the second fundamental form of Lagrangian submanifolds and the three basic equations(Gauss, Ricci, Codazzi) for Lagrangian submanifolds. In section 3 we prove our main theorem \ref{thm:1.1} and theorem \ref{thm:1.2}. In section 4 we study Lagrangian surfaces in $\mathbb{C}^n$ satisfying $\nabla^*\nabla^*T=0$ and a flow related to them. The proof of theorem \ref{main thm3} and theorem \ref{flow} will be sketched in this section. In the appendix we compute the quantity $\nabla^*\nabla^*T$ for Lagrangian graphs in detail, which helps to understand the proof of Theorem \ref{flow}.
\section{Preliminaries}\label{sect:2}~
Let $N^n(4c)$ be a complete, simply connected, $n$-dimensional K\"ahler manifold
with constant holomorphic sectional curvature $4c$. Let $M^n$ be an $n$-dimensional
Lagrangian submanifolds in $N^n(4c)$. We  denote also by $g$ the metric on
$M^n$. Let $\nabla$ (resp. $\bar\nabla$) be the Levi-Civita
connection of $M^n$ (resp. $N^{n}(4c)$). The Gauss and Weingarten
formulas of $M^n\hookrightarrow N^{n}(4c)$ are given,
respectively, by
\begin{equation}\label{eqn:2.1}
\bar\nabla_XY=\nabla_XY+h(X,Y)\ \ {\rm and}\ \
\bar\nabla_XV=-A_VX+\nabla^{\bot}_XV,
\end{equation}
where $X,Y\in TM^n$ are tangent vector fields, $V\in T^\perp M^n$ is a
normal vector field; $\nabla^{\bot}$ is the normal connection in the
normal bundle $T^\perp M^n$; $h$ is the second fundamental form and
$A_V$ is the shape operator with respect to $V$. From
\eqref{eqn:2.1}, we easily get
\begin{equation}\label{eqn:2.2}
g(h(X,Y),V)=g(A_VX,Y).
\end{equation}
The mean curvature vector $H$ of $M^n$ is defined by $H=\tfrac1n{\rm trace}\,h$.

For Lagrangian submanifolds, we have
\begin{equation}\label{eqn:2.3}
 \nabla_XJY=J\nabla^\bot_XY,
\end{equation}
\begin{equation}\label{eqn:2.4}
A_{JX}Y=-Jh(X,Y)=A_{JY}X.
\end{equation}
The above formulas immediately imply that $g(h(X,Y),JZ)$ is totally symmetric.

To utilize the moving frame method, we will use the following range
convention of indices:
\begin{equation*}
\begin{gathered}
i,j,k,l,m,p,s=1,\ldots,n; \ \
i^*=i+n \ etc..
\end{gathered}
\end{equation*}

Now, we choose a local {\it adapted Lagrangian frame} $\{e_1,\ldots, e_n,
e_{1^*},\ldots, e_{n^*}\}$ in $N^{n}(4c)$ in such a
way that, restricted  to $M^n$, $\{e_1,\ldots, e_n\}$ is an
orthonormal frame of $M^n$, and $\{e_{1^*}= Je_1,\ldots,e_{n^*}=J e_n\}$ is the orthonormal
normal vector fields of $M^n\hookrightarrow N^{n}(4c)$. Let
$\{\theta_1,\ldots,\theta_n\}$ be the dual frame of $\{e_1,\ldots ,
e_n\}$. Let $\theta_{ij}$ and $\theta_{i^*j^*}$ denote the
connection $1$-forms of $TM^n$ and $T^\perp M^n$, respectively.

Put $h_{ij}^{k^*} = g(h(e_i, e_j),J e_k)$. It is easily seen
that
\begin{equation}\label{eqn:2.5}
h_{ij}^{k^*}=h_{ik}^{j^*}=h_{jk}^{i^*},\ \ \forall\ i,j,k.
\end{equation}

Denote by $R_{ijkl}:=g\big(R(e_i,e_j)e_l,e_k\big)$ and
$R_{ijk^*l^*}:=g\big(R(e_i,e_j)e_{l*},e_{k^*}\big)$
the components of the curvature tensors of $\nabla$ and
$\nabla^{\bot}$ with respect to the adapted Lagrangian frame, respectively.
Then, we get the Gauss, Ricci and Codazzi equations:
\begin{equation}\label{eqn:2.6}
R_{ijkl}=c(\delta_{ik}\delta_{jl}-\delta_{il}\delta_{jk})
+\sum_{m}(h_{ik}^{m^*}h_{jl}^{m^*}-h_{il}^{m^*}h_{jk}^{m^*}),
\end{equation}
\begin{equation}\label{eqn:2.7}
R_{ijk^*l^*}=c(\delta_{ik}\delta_{jl}-\delta_{il}\delta_{jk})
+\sum_{m}(h_{ik}^{m^*}h_{jl}^{m^*}-h_{il}^{m^*}h_{jk}^{m^*}),
\end{equation}
\begin{equation}\label{eqn:2.8}
h^{m^*}_{ij,k}=h^{m^*}_{ik,j},
\end{equation}
where $h^{m^*}_{ij,k}$ is the components of the covariant
differentiation of $h$, defined by
\begin{equation}\label{eqn:2.9}
\sum_{l=1}^nh^{m^*}_{ij,l}\theta_l:=dh_{ij}^{m^*}+\sum_{l=1}^nh^{m^*}_{il}\theta_{lj}
+\sum_{l=1}^nh^{m^*}_{jl}\theta_{li}
+\sum_{l=1}^{n}h^{l^*}_{ij}\theta_{l^*m^*},
\end{equation}

Then from \eqref{eqn:2.5} and \eqref{eqn:2.8}, we have
\begin{equation}\label{eqn:2.10}
h^{m^*}_{ij,k}=h^{i^*}_{jk,m}=h^{j^*}_{km,i}=h^{k^*}_{mi,j}.
\end{equation}

We also have Ricci identity
\begin{equation}\label{eqn:2.11}
 h^{m^*}_{ij,lp}-h^{m^*}_{ij,pl}
=\sum_{k=1}^nh^{m^*}_{kj}R_{kilp}+\sum_{k=1}^nh^{m^*}_{ik}R_{kjlp}
+\sum_{k=1}^nh^{k^*}_{ij}R_{k^*m^{*}lp},
\end{equation}
where $h^{m^*}_{ij,lp}$ is defined by
\begin{equation*}
\sum_p
h^{m^*}_{ij,lp}\theta_p=dh^{m^*}_{ij,l}+\sum_ph^{m^*}_{pj,l}\theta_{pi}
+\sum_ph^{m^*}_{ip,l}\theta_{pj}+
\sum_ph^{m^*}_{ij,p}\theta_{pl}+\sum_{p}h^{p^*}_{ij,l}\theta_{p^*m^*}.
\end{equation*}
The mean curvature vector $H$ of
$M^n\hookrightarrow N^{n}(4c)$ is
$$
H=\tfrac1n\sum\limits_{i=1}^nh(e_i,e_i)=\sum\limits_{k=1}^nH^{k^*}e_{k^*},\
\ H^{k^*}=\tfrac1n\sum_ih^{k^*}_{ii}.
$$
Letting $i = j$ in \eqref{eqn:2.9} and carrying out summation over $i$, we have
\begin{equation*}
H^{k^*}_{,l}\theta_l=dH^{k^*}+\sum_{l}H^{l^*}\theta_{l^*k^*},
\end{equation*}
and we further have
\begin{eqnarray}\label{eqn:12}
{H}^{k^*}_{,i}=H^{i^*}_{,k}
\end{eqnarray} for any $i,k$.

\section{Lagrangian submanifolds in $N^n(4c)(c\geq0)$ satisfying $\nabla^*T=0$}\label{sect:3}
In this section, we will establish a Simons' type integral inequality
and gap theorem for Lagrangian submanifolds in $ N^{n}(4c)$ satisfying $\nabla^*T=0$, i.e. to give a proof of theorem \ref{thm:1.1} and theorem \ref{thm:1.2}.

In this section we assume that $M^n \hookrightarrow N^{n}(4c)(c\geq0)$ is a Lagrangian submanifold and $n\geq2$.

Firstly, we  define a trace-free tensor $\tilde h(X,Y)$ defined by
\begin{equation}\label{eqn:3.1}
\tilde h(X,Y)=h(X,Y)-\frac{n}{n+2}\big\{g(X,Y) H+g(J X,H)J Y+g(J Y,H)J X \big\}
\end{equation}
for any tangent vector fields $X,Y$ on $M^n$.

For $\tilde h=0$, Castro, Ros and Urbano for $c=0$ \cite{CU}\cite{RU}  and Castro, Chen and Urbano for $c\not=0$ \cite{CU1}\cite{Ch} proved the
following famous classification theorem:
\begin{theorem}
Let $x:M^n\rightarrow N^n(4c)(c=0\ or \ 1)$ be a Lagrangian submanifold, then
\begin{equation*}
  h(X,Y)=\frac{n}{n+2}\big\{g(X,Y) H+g(J X,H)J Y+g(J Y,H)J X \big\}
\end{equation*}
holds for any tangent vectors $X$ and $Y$ on $M^n$ if and only if $M^n$ is either an open part  of the
Whitney spheres or a totally geodesic submanifold.
\end{theorem}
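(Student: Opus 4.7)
The forward implication is a direct verification: if $h\equiv 0$, both sides of the identity vanish, while for the Whitney spheres $\phi_{r,A}\subset\mathbb C^n$ (resp.\ $\phi_\theta\subset\mathbb{CP}^n$) one differentiates the explicit parametrization twice and checks by computation that $h(X,Y)$ takes precisely the claimed form, i.e.\ $\tilde h=0$.

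For the converse, assume $\tilde h\equiv 0$. If $H\equiv 0$ everywhere, the identity \eqref{eqn:3.1} forces $h\equiv 0$, and $M^n$ is totally geodesic. Otherwise, I would work on the open set $U:=\{p: H(p)\neq 0\}$ and choose a local orthonormal frame $\{e_1,\ldots,e_n\}$ with $e_1=-JH/|H|$, so that $H=|H|e_{1^*}$. Substituting into the formula defining $\tilde h$ and using the total symmetry $h^{k^*}_{ij}=h^{i^*}_{jk}$, the only nonzero components should be
\begin{equation*}
h^{1^*}_{11}=\tfrac{3n}{n+2}|H|,\qquad h^{1^*}_{ii}=h^{i^*}_{1i}=h^{i^*}_{i1}=\tfrac{n}{n+2}|H|\quad(i\geq 2).
\end{equation*}

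The next step is to feed this rigid form of $h$ into the Codazzi equation $h^{m^*}_{ij,k}=h^{m^*}_{ik,j}$. Testing the appropriate triples should yield two structural consequences: (i) $e_i(|H|)=0$ for all $i\geq 2$, so $\nabla|H|$ is proportional to $e_1$; and (ii) the connection $1$-forms $\omega^j_i$ are pinned down in such a way that the distribution $\mathrm{span}\{e_2,\ldots,e_n\}$ is integrable, its leaves are totally umbilic in $M^n$, and $U$ has the local structure of a warped product $I\times_\rho S^{n-1}$ with warping function $\rho$ determined by $|H|$. The ambient curvature $c$ enters only through the Gauss equation, so the discussion is uniform in $c\in\{0,1\}$.

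The final, and in my view the main, step is to pass from this intrinsic warped-product data together with the prescribed $h$ to an explicit extrinsic parametrization. I would do this by deriving an ODE system for the position vector in $\mathbb C^n$ (respectively for a horizontal lift to $\mathbb C^{n+1}\setminus\{0\}$ in the $c=1$ case) along an integral curve of $e_1$, using that $h$ is completely determined in the adapted frame. By the fundamental theorem for submanifolds in a space form the solution is unique up to rigid ambient isometry, so comparison with the explicit formulas for $\phi_{r,A}$ and $\phi_\theta$ identifies $M^n$ with (an open part of) the corresponding Whitney sphere. The dichotomy in the $c=1$ case --- totally geodesic versus $\phi_\theta$ --- corresponds to the two families of initial data for this ODE, with the totally geodesic $\mathbb{RP}^n\subset\mathbb{CP}^n$ appearing as a degenerate limit when $|H|\to 0$.
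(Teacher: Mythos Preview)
The paper does not prove this theorem at all: it is quoted as a known classification result, attributed for $c=0$ to Castro--Urbano \cite{CU} and Ros--Urbano \cite{RU}, and for $c\neq 0$ to Castro--Urbano \cite{CU1} and Chen \cite{Ch}. There is therefore no in-paper proof to compare your proposal against.

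That said, your outline is broadly in the spirit of the cited references: fixing an adapted frame with $e_1=-JH/|H|$, reading off the components of $h$ from $\tilde h=0$, and then exploiting Codazzi to extract a warped-product/ODE structure is exactly how Ros--Urbano and Chen proceed. Two cautions. First, your sketch treats the open set $U=\{H\neq 0\}$ but does not address why the classification extends across the closed set $\{H=0\}$ when $H$ is not identically zero; in the original arguments this requires an additional step (e.g.\ showing $|H|$ satisfies an ODE forcing isolated zeros, or a continuity/analyticity argument). Second, in the $c=1$ case the passage to a horizontal lift and the resulting ODE system are more delicate than your one-line description suggests; Chen's argument in \cite{Ch} in fact relies on an explicit integration involving Jacobi elliptic functions, and the uniqueness-up-to-congruence step needs to be carried out in $\mathbb{CP}^n$ rather than in a linear model. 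As a proof \emph{outline} your proposal is on the right track, but as written it defers the genuinely hard analytic work to phrases like ``should yield'' and ``deriving an ODE system''.
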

With respect to Lagrangian frame
$\{e_1,\ldots, e_n,
e_{1^*},\ldots, e_{n^*}\}$ in $ N^n(4c)$, we have
\begin{equation}\label{eqn:3.2}
\begin{aligned}
\tilde h^{m^*}_{ij}=&h^{m^*}_{ij}-\frac{n}{n+2}\big(H^{m^*}\delta_{ij}+H^{i^*}\delta_{jm}
+H^{j^*}\delta_{im}\big)\\
=&h^{m^*}_{ij}-c^{m^*}_{ij}
\end{aligned}
\end{equation}
where $c^{m^*}_{ij}=\frac{n}{n+2}\big\{H^{m^*}\delta_{ij}+H^{i^*}\delta_{jm}
+H^{j^*}\delta_{im}\big\}$.

The first covariant derivatives of $\tilde h^{m^*}_{ij}$ are defined by
\begin{equation}\label{eqn:3.3}
\sum_{l=1}^n\tilde h^{m^*}_{ij,l}\theta_l:=d\tilde h_{ij}^{m^*}+\sum_{l=1}^n\tilde h^{m^*}_{il}\theta_{lj}
+\sum_{l=1}^n\tilde h^{m^*}_{jl}\theta_{li}
+\sum_{l=1}^{n}\tilde h^{l^*}_{ij}\theta_{l^*m^*}.
\end{equation}

The second covariant derivatives of $\tilde h^{m}_{ij}$ are defined by
\begin{equation}\label{eqn:3.5}
\sum_{l=1}^n\tilde h^{m^*}_{ij,kl}\theta_l:=d\tilde h_{ij,k}^{m^*}
+\sum_{l=1}^n\tilde h^{m^*}_{lj,k}\theta_{li}
+\sum_{l=1}^n\tilde h^{m^*}_{il,k}\theta_{lj}
+\sum_{l=1}^n\tilde h^{m^*}_{ij,l}\theta_{lk}
+\sum_{l=1}^{n}\tilde h^{\l^*}_{ij,k}\theta_{l^*m^*}.
\end{equation}

On the other hand, we have the following Ricci identities
\begin{equation}\label{eqn:3.5}
\tilde h^{m^*}_{ij,kp}-\tilde h^{m^*}_{ij,pk}
=\sum_{l}\tilde h^{m^*}_{lj}R_{likp}+\sum_l\tilde h^{m^*}_{il}R_{ljkp}
+\sum_{l}\tilde h^{l^*}_{ij}R_{l^*m^*kp}.
\end{equation}

The following proposition links those geometric quantities together:
\begin{lemma}
Let $M^n\hookrightarrow N^n(4c)$ be a Lagrangian submanifold, then the  Lagrangian trace-free second fundamental form $\tilde h$
satisfies
\begin{equation}\label{eqn:3.6}
 | \tilde h |^2= | h |^2-\frac{3n^2}{n+2} | H |^2.
\end{equation}
\begin{equation}\label{eqn:3.7}
\sum_m\tilde h^{m^*}_{ij,m}=\tfrac{n}{n+2}\big(nH^{i^*}_{,j}-{\rm div}JH\,g_{ij}\big).
\end{equation}
\end{lemma}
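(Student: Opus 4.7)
The plan is to prove both identities by direct componentwise calculation, working with the splitting $\tilde h^{m^*}_{ij}=h^{m^*}_{ij}-c^{m^*}_{ij}$ from \eqref{eqn:3.2}. The key algebraic inputs are the total symmetry $h^{m^*}_{ij}=h^{i^*}_{jm}=h^{j^*}_{mi}$ of \eqref{eqn:2.5}, the corresponding symmetry \eqref{eqn:2.10} for the first covariant derivatives, the Codazzi equation \eqref{eqn:2.8}, the trace formula $nH^{k^*}=\sum_i h^{k^*}_{ii}$, and the relation \eqref{eqn:12} $H^{k^*}_{,i}=H^{i^*}_{,k}$.

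For \eqref{eqn:3.6} I would expand
\[
|\tilde h|^2 = |h|^2 - 2\sum_{i,j,m} h^{m^*}_{ij}\,c^{m^*}_{ij} + |c|^2.
\]
Contracting $h^{m^*}_{ij}$ against each of the three summands in $c^{m^*}_{ij}$ and using the Lagrangian symmetry — for instance $\sum_{i,m}h^{m^*}_{im}H^{i^*}=\sum_{i,m}h^{i^*}_{mm}H^{i^*}=n|H|^2$ — yields $\sum_{i,j,m} h^{m^*}_{ij}c^{m^*}_{ij}=\frac{3n^2}{n+2}|H|^2$. Expanding $|c|^2$ in the same way produces three squared diagonal terms contributing $3n|H|^2$ in total and three cross terms in which the Kronecker deltas force $i=j=m$, each contributing $2|H|^2$; multiplying the sum $(3n+6)|H|^2$ by $\bigl(\tfrac{n}{n+2}\bigr)^2$ gives $\frac{3n^2}{n+2}|H|^2$, and the identity is immediate.

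For \eqref{eqn:3.7} I would again split $\tilde h^{m^*}_{ij,m}=h^{m^*}_{ij,m}-c^{m^*}_{ij,m}$. The first piece is handled by \eqref{eqn:2.10} and \eqref{eqn:2.8}: $h^{m^*}_{ij,m}=h^{i^*}_{jm,m}=h^{i^*}_{mm,j}$, so $\sum_m h^{m^*}_{ij,m}=nH^{i^*}_{,j}$. For the second, differentiating the definition of $c^{m^*}_{ij}$ componentwise gives
\[
c^{m^*}_{ij,l}=\tfrac{n}{n+2}\bigl(H^{m^*}_{,l}\delta_{ij}+H^{i^*}_{,l}\delta_{jm}+H^{j^*}_{,l}\delta_{im}\bigr);
\]
summing over $m$ and invoking $\sum_m H^{m^*}_{,m}=\operatorname{div}(JH)$ together with \eqref{eqn:12} collapses this to $\tfrac{n}{n+2}\bigl(\operatorname{div}(JH)\,\delta_{ij}+2H^{i^*}_{,j}\bigr)$, and simplifying the rational coefficient of $H^{i^*}_{,j}$ yields exactly \eqref{eqn:3.7}. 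The only step that requires genuine care — the main potential obstacle — is confirming the sign convention under which $\sum_m H^{m^*}_{,m}$ is identified with $\operatorname{div}(JH)$; this follows from $JH=-\sum_k H^{k^*}e_k$ and the compatibility $\theta_{k^*m^*}=\theta_{km}$ of the adapted Lagrangian frame with the Levi-Civita connection. The remainder is bookkeeping of indices and symmetries.
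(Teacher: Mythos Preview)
Your proof is correct and is precisely the direct computation the paper has in mind --- the paper's own proof is the single sentence ``\eqref{eqn:3.6} and \eqref{eqn:3.7} can be immediately obtained from \eqref{eqn:3.2}'', and you have simply supplied the details. One small caveat: your parenthetical justification of the sign convention via $JH=-\sum_k H^{k^*}e_k$ would actually yield $\operatorname{div}(JH)=-\sum_m H^{m^*}_{,m}$, but since the paper's own \eqref{eqn:3.8} makes clear it is using ``$\operatorname{div}JH$'' as shorthand for $+\sum_m H^{m^*}_{,m}$, your computation lands on the stated formula regardless.
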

\begin{proof}
\eqref{eqn:3.6} and \eqref{eqn:3.7} can be immediately obtained from \eqref{eqn:3.2}.
\end{proof}

\begin{definition}
We define a $(0,2)$-tensor $T$ in local orthonormal basis as follows:
\begin{equation}\label{eqn:3.8}
T_{ij}=\tfrac1n\sum_m\tilde{h}^{m^*}_{ij,m}=\tfrac{1}{n+2}\big(nH^{i^*}_{,j}-\sum_mH^{m^*}_{,m}\,g_{ij}\big)
\end{equation}
\end{definition}
\begin{remark}
$T$ is a trace-free tensor and symmetric. $T=0$ if  and only if $JH$ is a conformal vector field.
\end{remark}
\begin{theorem}
Let  $M^n\hookrightarrow N^n(4c)$ be a Lagrangian submanifold. Then
\begin{equation*}
|\nabla JH|^2\geq\tfrac1n|{\rm div}JH|^2,
\end{equation*}
and equality  holds if and only $T=0$.
\end{theorem}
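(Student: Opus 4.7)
The plan is to recognize the inequality as an instance of the elementary fact that the squared Frobenius norm of a symmetric matrix dominates $1/n$ times the square of its trace, with the gap being precisely $|T|^2$ up to a constant; so no analysis is required, only an algebraic identification.

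First, I would unwind both sides of the inequality in an adapted Lagrangian frame. Since $H=\sum_k H^{k^*}e_{k^*}$ is normal, formula \eqref{eqn:2.3} gives $\nabla_{e_j}JH=J\nabla_{e_j}^{\perp}H$, so up to sign the components of $\nabla JH$ with respect to $\{e_i\}$ are exactly the $H^{i^*}_{,j}$, and
\begin{equation*}
|\nabla JH|^2 = \sum_{i,j}\bigl(H^{i^*}_{,j}\bigr)^2,\qquad (\mathrm{div}\, JH)^2 = \Bigl(\sum_i H^{i^*}_{,i}\Bigr)^2.
\end{equation*}
The desired inequality then reduces to
\begin{equation*}
\sum_{i,j}\bigl(H^{i^*}_{,j}\bigr)^2 \;\geq\; \tfrac{1}{n}\Bigl(\sum_i H^{i^*}_{,i}\Bigr)^2.
\end{equation*}

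Next I would exploit \eqref{eqn:12}, which gives $H^{i^*}_{,j}=H^{j^*}_{,i}$, so the matrix $A_{ij}:=H^{i^*}_{,j}$ is symmetric. For any symmetric $n\times n$ matrix the orthogonal decomposition
\begin{equation*}
A=\Bigl(A-\tfrac{\mathrm{tr}\, A}{n}I\Bigr)+\tfrac{\mathrm{tr}\, A}{n}I
\end{equation*}
yields $|A|^2=\bigl|A-\tfrac{\mathrm{tr}\, A}{n}I\bigr|^2+\tfrac{(\mathrm{tr}\, A)^2}{n}$, which is exactly the inequality above, with equality if and only if $A$ is a scalar multiple of the identity.

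Finally, I would identify the traceless part of $A$ with $T$. Reading off \eqref{eqn:3.8},
\begin{equation*}
T_{ij}=\tfrac{n}{n+2}\Bigl(H^{i^*}_{,j}-\tfrac{1}{n}\sum_m H^{m^*}_{,m}\,g_{ij}\Bigr)=\tfrac{n}{n+2}\Bigl(A-\tfrac{\mathrm{tr}\, A}{n}I\Bigr)_{ij},
\end{equation*}
so the traceless part of $A$ vanishes if and only if $T=0$, giving the characterization of equality. There is no substantive obstacle here; the only point to handle carefully is the symmetry \eqref{eqn:12}, without which the orthogonal trace/traceless decomposition above would not directly apply, and this is where I expect the main (minor) subtlety to lie.
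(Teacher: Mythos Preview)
Your proof is correct and is essentially the paper's own argument spelled out: the paper simply writes $0\le|T|^2=\bigl(\tfrac{n}{n+2}\bigr)^2\bigl(|\nabla JH|^2-\tfrac{1}{n}|\mathrm{div}\,JH|^2\bigr)$ as a direct computation from \eqref{eqn:3.8}, and your trace/traceless decomposition of $A_{ij}=H^{i^*}_{,j}$ is exactly that identity unpacked. One minor remark: the orthogonal splitting $|A|^2=\bigl|A-\tfrac{\mathrm{tr}\,A}{n}I\bigr|^2+\tfrac{(\mathrm{tr}\,A)^2}{n}$ holds for \emph{any} square matrix (the traceless part is Frobenius-orthogonal to $I$ regardless of symmetry), so the symmetry \eqref{eqn:12} is not actually needed here and there is no subtlety at that step.
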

\begin{proof}
By direct computations and \eqref{eqn:3.8}, we have $0\leq|T|^2=(\tfrac{n}{n+2})^2\big(|\nabla JH|^2-\frac{1}{n}|{\rm div}JH|^2\big)$.
If $|\nabla JH|^2-\frac{1}{n}|{\rm div}JH|^2=0$, then $T=0$.
\end{proof}
\begin{remark}
In \cite{CU,CMU,RU}, Castro, Montealegre, Ros and Urbano systematically studied Lagrangian submanifolds in a complex space form with conformal Maslov
form (i.e. $T=0$) and obtained related classification theorems.
\end{remark}

In the following we will derive a Simons' type identity for $\Delta|\tilde h|^2$. First we have
\begin{lemma}\label{lem:3.2}
Let  $M^n\hookrightarrow N^n(4c)$ be a Lagrangian immersion. Then
\begin{equation}\label{eqn:3.9}
\begin{aligned}
\sum_{ijmk}\tilde h^{m^*}_{ij}\tilde h^{m^*}_{ij,kk}=&(n+2)\sum_{m,i,j}(\tilde h^{m^*}_{ij}T_{ij})_{,m}
-{n^2}\sum_{ij}\big[( H^{i^*}T_{ij})_{,j}-H^{i^*}T_{ij,j}\big]\\
&+ \sum_{i,j,m,k,l}\tilde h^{m^*}_{ij}
\Big(\tilde h^{m^*}_{lk}R_{lijk}
+\tilde h^{m^*}_{il}R_{lkjk}
+\tilde h^{l^*}_{ik}R_{l^*m^*jk}\Big)
\end{aligned}
\end{equation}
\end{lemma}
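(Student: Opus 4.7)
My plan is a Bochner--Weitzenb\"ock-style computation that mimics the classical Simons' identity for $h$ but carefully tracks the trace-free correction $c^{m^*}_{ij}$ from \eqref{eqn:3.2}. The starting point is a modified Codazzi identity: subtracting the two decompositions $\tilde h^{m^*}_{ij,k}=h^{m^*}_{ij,k}-c^{m^*}_{ij,k}$ and $\tilde h^{m^*}_{ik,j}=h^{m^*}_{ik,j}-c^{m^*}_{ik,j}$ and invoking the genuine Codazzi equation \eqref{eqn:2.8} for $h$ yields
\begin{equation*}
\tilde h^{m^*}_{ij,k} - \tilde h^{m^*}_{ik,j} = c^{m^*}_{ik,j} - c^{m^*}_{ij,k}.
\end{equation*}
Applying $\nabla_k$ to this identity and summing over $k$ then gives
\begin{equation*}
\sum_k \tilde h^{m^*}_{ij,kk} = \sum_k \tilde h^{m^*}_{ik,jk} + \sum_k\bigl(c^{m^*}_{ik,jk} - c^{m^*}_{ij,kk}\bigr).
\end{equation*}

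Next I would apply the Ricci identity \eqref{eqn:3.5} to commute the last two derivatives of $\tilde h^{m^*}_{ik,jk}$; this produces exactly the three curvature contractions already visible on the right-hand side of \eqref{eqn:3.9}, together with the new term $\sum_k\tilde h^{m^*}_{ik,kj}$. A short calculation using $\tilde h=h-c$, Codazzi, and the symmetry $H^{i^*}_{,j}=H^{j^*}_{,i}$ from \eqref{eqn:12} shows that $\sum_k \tilde h^{m^*}_{ik,k}=nT_{mi}$, which is essentially \eqref{eqn:3.7} rewritten via the total symmetry of $\tilde h$; since covariant differentiation commutes with contraction, this gives $\sum_k \tilde h^{m^*}_{ik,kj}=nT_{mi,j}$. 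At this point the identity is reduced to a purely algebraic statement relating $\tilde h$, $T$, Hessians of $H$, and the $c$-terms.

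For the final step I would contract the previous identity with $\tilde h^{m^*}_{ij}$ and sum over $i,j,m$. The total symmetry $\tilde h^{m^*}_{ij}=\tilde h^{i^*}_{jm}=\tilde h^{j^*}_{mi}$, inherited from \eqref{eqn:2.10} together with the manifest symmetry of $c^{m^*}_{ij}$, lets me rewrite $\sum\tilde h^{m^*}_{ij}T_{mi,j}$ as $\sum\tilde h^{m^*}_{ij}T_{ij,m}$, and a Leibniz rule then converts the latter into the divergence $\sum_m(\tilde h^{m^*}_{ij}T_{ij})_{,m}$ minus the leftover $\sum \tilde h^{m^*}_{ij,m}T_{ij}=n|T|^2$ (again by \eqref{eqn:3.7}). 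For the $c$-contributions I would exploit the trace-free property $\sum_i\tilde h^{m^*}_{ii}=0$: of the six pieces produced by expanding $c^{m^*}_{ij,kk}$ and $c^{m^*}_{ik,jk}$ via \eqref{eqn:3.2}, the three that contain a Kronecker delta pairing two free indices of $\tilde h$ vanish identically, and the remaining three collapse, via \eqref{eqn:12}, to a multiple of the Hessian contraction $\sum\tilde h^{m^*}_{ij}H^{m^*}_{,ij}$.

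The main obstacle is the coefficient matching in this last step. After collecting everything I am left with a combination of $\sum_m(\tilde h^{m^*}_{ij}T_{ij})_{,m}$, $|T|^2$, $\sum\tilde h^{m^*}_{ij}H^{m^*}_{,ij}$, and $\sum H^{i^*}_{,j}T_{ij}$, whose numerical prefactors all involve the $\tfrac{n}{n+2}$ factors coming from the definition of $c$. The algebraic identity $(n+2)T_{ij}=nH^{i^*}_{,j}-({\rm div}JH)g_{ij}$ together with ${\rm tr}\,T=0$, which yields $n\sum H^{i^*}_{,j}T_{ij}=(n+2)|T|^2$, is what forces these coefficients to line up to exactly $(n+2)$ in front of the divergence and $-n^2$ in front of $(H^{i^*}T_{ij})_{,j}-H^{i^*}T_{ij,j}=H^{i^*}_{,j}T_{ij}$. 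This bookkeeping is tedious but becomes mechanical once the structural steps above are in place.
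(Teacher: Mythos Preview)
Your proposal is correct and follows essentially the same route as the paper's proof: the modified Codazzi identity \eqref{eqn:3.10}, the Ricci commutation \eqref{eqn:3.5}, the reduction $\sum_k\tilde h^{m^*}_{ik,kj}=nT_{im,j}$, and finally the Leibniz manipulation together with $n\sum H^{i^*}_{,j}T_{ij}=(n+2)|T|^2$ to produce the coefficients $(n+2)$ and $-n^2$. One small bookkeeping slip: after expanding $c^{m^*}_{ik,jk}-c^{m^*}_{ij,kk}$ and contracting with $\tilde h^{m^*}_{ij}$, four (not three) of the six pieces are killed by the trace-free property---the three from $c^{m^*}_{ij,kk}$ and the $\delta_{im}$ piece from $c^{m^*}_{ik,jk}$---leaving exactly the two terms $\tfrac{n}{n+2}(H^{m^*}_{,ji}+H^{i^*}_{,jm})$ that collapse to $\tfrac{2n}{n+2}\sum\tilde h^{m^*}_{ij}H^{m^*}_{,ij}$; this does not affect your argument.
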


\begin{proof}
By using the Codazzi equation \eqref{eqn:2.8} and \eqref{eqn:3.2}, the definition of $\tilde h$ under local coordinates is just
\begin{equation}\label{eqn:3.10}
 \tilde h^{m^*}_{ij,k}=\tilde h^{m^*}_{ik,j}+\tfrac{n}{n+2}\big(\delta_{ik}H^{m^*}_{,j}
 +\delta_{km}H^{i^*}_{,j}-\delta_{ij}H^{m^*}_{,k}-\delta_{jm}H^{i^*}_{,k}\big)
\end{equation}

With the help of Ricci identity \eqref{eqn:3.5}, \eqref{eqn:3.8} and \eqref{eqn:3.10}, we have
\begin{equation}\label{eqn:3.11}
\begin{aligned}
 \sum_k \tilde h^{m^*}_{ij,kk}=&\sum_k\tilde h^{m^*}_{ik,jk}+\sum_k\tfrac{n}{n+2}\big(\delta_{ik}H^{m^*}_{,jk}
 +\delta_{km}H^{i}_{,jk}-\delta_{ij}H^{m^*}_{,kk}-\delta_{jm}H^{i^*}_{,kk}\big)\\
 =&\sum_k \tilde h^{m^*}_{ik,kj}+
\sum_{k,l}\tilde h^{m^*}_{lk}R_{lijk}+\sum_{k,l}\tilde h^{m^*}_{il}R_{lkjk}
+\sum_{k,l}\tilde h^{l^*}_{ik}R_{l^*m^*jk}\\
&+\sum_k\tfrac{n}{n+2}\big(\delta_{ik}H^{m^*}_{,jk}
 +\delta_{km}H^{i}_{,jk}-\delta_{ij}H^{m^*}_{,kk}-\delta_{jm}H^{i^*}_{,kk}\big)\\
 =&\sum_k \tilde h^{m^*}_{kk,ij}+
\sum_{k,l}\tilde h^{m^*}_{lk}R_{lijk}+\sum_{k,l}\tilde h^{m^*}_{il}R_{lkjk}
+\sum_{k,l}\tilde h^{l^*}_{ik}R_{l^*m^*jk}\\
&+\sum_k\tfrac{n}{n+2}\big(\delta_{ik}H^{m^*}_{,jk}
 +\delta_{km}H^{i^*}_{,jk}-\delta_{ij}H^{m^*}_{,kk}-\delta_{jm}H^{i^*}_{,kk}\big)+nT_{im,j}.
 \end{aligned}
\end{equation}
Then, by using \eqref{eqn:3.8} and the fact that $\tilde{h}$ is trace free and tri-symmetric, we have
\begin{equation*}
\begin{aligned}
\sum_{i,j,m,k}\tilde h^{m^*}_{ij}\tilde h^{m^*}_{ij,kk}=&\sum_{i,j,m,k}\tilde h^{m^*}_{ij}\Big[\tilde h^{m^*}_{lk}R_{lijk}+\tilde h^{m^*}_{il}R_{lkjk}+\tilde h^{l^*}_{ik}R_{l^*m^*jk}\Big]\\
&+\sum_{m,i,j}\tilde h^{m^*}_{ij}\Big[T_{mj,i}+T_{ij,m}\Big]+n\sum_{m,i,j}\tilde h^{m^*}_{ij}T_{im,j}\\
=&\sum_{i,j,m,k}\tilde h^{m^*}_{ij}\Big[\tilde h^{m^*}_{lk}R_{lijk}+\tilde h^{m^*}_{il}R_{lkjk}+\tilde h^{l^*}_{ik}R_{l^*m^*jk}\Big]\\
&+(n+2)\sum_{m,i,j}\tilde h^{m^*}_{ij}T_{ij,m}
\end{aligned}
\end{equation*}

In addition, we have
\begin{equation*}
\begin{aligned}
\sum_{mij}\tilde h^{m^*}_{ij}T_{ij,m}=&\sum_{m,i,j}(\tilde h^{m^*}_{ij}T_{ij})_{,m}-\sum_{m,i,j}\tilde h^{m^*}_{ij,m}T_{ij}
\\=&\sum_{m,i,j}(\tilde h^{m^*}_{ij}T_{ij})_{,m}-n \sum_{i,j}T_{ij}T_{ij}\\
=&\sum_{m,i,j}(\tilde h^{m^*}_{ij}T_{ij})_{,m}-n\sum_{i,j}T_{ij}\tfrac{1}{n+2}\big(nH^{i^*}_{,j}-\sum_mH^{m^*}_{,m}\,g_{ij}\big)
\\=&\sum_{m,i,j}(\tilde h^{m^*}_{ij}T_{ij})_{,m}-\tfrac{n^2}{n+2}\sum_{i,j} H^{i^*}_{,j}T_{ij}\\
=&\sum_{m,i,j}(\tilde h^{m^*}_{ij}T_{ij})_{,m}-\tfrac{n^2}{n+2}\sum_{ij}\big[( H^{i^*}T_{ij})_{,j}-H^{i^*}T_{ij,j}\big],
\end{aligned}
\end{equation*}
where in the third equality we used \eqref{eqn:3.8} and the fact that $trace_gT=0$. Thus, we obtain the assertion.
\end{proof}
Next, by using lemma \ref{lem:3.2}
\begin{equation}\label{eqn:3.12}
\begin{aligned}
  \frac12\Delta | \tilde h|^2
=&|\nabla \tilde h|^2+\sum_{ijmk}\tilde h^{m^*}_{ij}\tilde h^{m^*}_{ij,kk}
\\=&|\nabla \tilde h|^2+(n+2)\sum_{i,j,m}(\tilde h^{m^*}_{ij}T_{ij})_{,m}-{n^2}\sum_{i,j}\big[( H^{i^*}T_{ij})_{,j}-H^{i^*}T_{ij,j}\big]\\
&+\underbrace{\sum_{i,j,k,m,l}\tilde h^{m^*}_{ij}\tilde h^{m^*}_{lk}R_{lijk}}_{I}
+\underbrace{\sum_{i,j,k,m,l}\tilde h^{m^*}_{ij}\tilde h^{m^*}_{il}R_{lkjk}}_{II}
+\underbrace{\sum_{i,j,k,m,l}\tilde h^{m^*}_{ij}\tilde h^{l^*}_{ik}R_{lmjk}}_{III}.
\end{aligned}
\end{equation}
Note that by the symmetry of $\tilde{h}_{ij}^{k^*}$, $I=III$. Hence we only need to compute $I$ and $II$. Direct computations show that
\begin{equation}\label{eqn:3.13}
\begin{aligned}
I=&c\sum_{i,j,k,m,l}\tilde h^{m^*}_{ij}\tilde h^{m^*}_{kl}(\delta_{lj}\delta_{ik}-\delta_{lk}
  \delta_{ij})+\sum_{i,j,k,m,l,t}\tilde h^{m^*}_{ij}\tilde h^{m^*}_{kl}(\tilde h^{t^*}_{lj}\tilde h^{t^*}_{ik}-\tilde h^{t^*}_{lk}
  \tilde h^{t^*}_{ij})\\
  &+\sum_{i,j,k,m,l,t}\tilde h^{m^*}_{ij}\tilde h^{m^*}_{kl}(\tilde h^{t^*}_{lj}c^{t^*}_{ik}+c^{t^*}_{lj}\tilde h^{t^*}_{ik}-\tilde h^{t^*}_{lk}
  c^{t^*}_{ij}-c^{t^*}_{lk}\tilde h^{t^*}_{ij}+c^{t^*}_{lj}c^{t^*}_{ik}-c^{t^*}_{lk}c^{t^*}_{ij})\\
  =& c| \tilde h |^2+\tfrac{n^2}{(n+2)^2} | \tilde h |^2 | H |^2+\tfrac{2n}{n+2}\sum_{j,k,l,m,t}\tilde h^{m^*}_{jk}
  \tilde h^{m^*}_{kl}\tilde h^{t^*}_{lj}H^{t^*}\\
  &+\sum_{i,j,k,m,l,t}\tilde h^{m^*}_{ij}\tilde h^{m^*}_{kl}(\tilde h^{t^*}_{lj}\tilde h^{t^*}_{ik}-\tilde h^{t^*}_{lk}
  \tilde h^{t^*}_{ij})+\tfrac{2n^2}{(n+2)^2}\sum_{i,j,k,m}\tilde h^{m^*}_{ij}\tilde h^{m^*}_{jk}H^{i^*}H^{k^*},
 \end{aligned}
\end{equation}
where in the second equality we used the following identities derived by direct computations
\begin{eqnarray*}
\sum_{i,j,k,m,l,t}\tilde h^{m^*}_{ij}\tilde h^{m^*}_{kl}\tilde h^{t^*}_{lj}c^{t^*}_{ik}=\sum_{i,j,k,m,l,t}\tilde h^{m^*}_{ij}\tilde h^{m^*}_{kl}c^{t^*}_{lj}\tilde h^{t^*}_{ik}=\tfrac{3n}{n+2}\sum_{j,k,l,m,t}\tilde h^{m^*}_{jk}
  \tilde h^{m^*}_{kl}\tilde h^{t^*}_{lj}H^{t^*},
\end{eqnarray*}
\begin{eqnarray*}
\sum_{i,j,k,m,l,t}\tilde h^{m^*}_{ij}\tilde h^{m^*}_{kl}\tilde h^{t^*}_{lk}
  c^{t^*}_{ij}=\sum_{i,j,k,m,l,t}\tilde h^{m^*}_{ij}\tilde h^{m^*}_{kl}c^{t^*}_{lk}\tilde h^{t^*}_{ij}=\tfrac{2n}{n+2}\sum_{j,k,l,m,t}\tilde h^{m^*}_{jk}
  \tilde h^{m^*}_{kl}\tilde h^{t^*}_{lj}H^{t^*},
\end{eqnarray*}
and since
\begin{equation*}
\begin{aligned}
\sum_tc^{t^*}_{lj}c^{t^*}_{ik}=&\frac{n^2}{(n+2)^2}\sum_t\Big((\mathfrak{S}_{t,l,j}
H^{t^*}\delta_{lj})(\mathfrak{S}_{t,i,k}H^{t^*}\delta_{ik})
\Big)\\
=&\frac{n^2}{(n+2)^2}\Big(\mathfrak{S}_{i,j,k,l}H^{l^*}H^{i^*}\delta_{jk}+2H^{l^*}H^{j^*}\delta_{ik}\\
&+2H^{i^*}H^{k^*}\delta_{jl}+|H|^2\delta_{ik}\delta_{jl}\Big),
\end{aligned}
\end{equation*}
where $\mathfrak{S}$  stands for the cyclic sum, thus
\begin{eqnarray*}
\sum_{i,j,k,m,l,t}\tilde h^{m^*}_{ij}\tilde h^{m^*}_{kl}c^{t^*}_{lj}c^{t^*}_{ik}=\tfrac{n^2}{(n+2)^2} | \tilde h |^2 | H |^2+\tfrac{6n^2}{(n+2)^2}\sum_{i,j,k,m}\tilde h^{m^*}_{ij}\tilde h^{m^*}_{jk}H^{i^*}H^{k^*},
\end{eqnarray*}
\begin{eqnarray*}
\sum_{i,j,k,m,l,t}\tilde h^{m^*}_{ij}\tilde h^{m^*}_{kl}c^{t^*}_{lk}c^{t^*}_{ij}=\tfrac{4n^2}{(n+2)^2}\sum_{i,j,k,m}\tilde h^{m^*}_{ij}\tilde h^{m^*}_{jk}H^{i^*}H^{k^*}.
\end{eqnarray*}
Similarly we have
\begin{equation}\label{eqn:3.14}
\begin{aligned}
II&=\sum_{i,j,k,m,l}\tilde h^{m^*}_{ij}\tilde h^{m^*}_{il}[c(\delta_{lj}\delta_{kk}-\delta_{lk}\delta_{jk})+\sum_t(h_{lj}^{t^*}h_{kk}^{t^*}-h_{lk}^{t^*}h_{kj}^{t^*})
\\&=(n-1)c | \tilde h|^2+\sum_{i,j,k,m,l,t}\tilde h^{m^*}_{ij}\tilde h^{m^*}_{li}\big(n\tilde h^{t^*}_{lj}H^{t^*}+nc^{t^*}_{lj}H^{t^*}\\
&-\tilde h^{t^*}_{lk}
  \tilde h^{t^*}_{kj}-\tilde h^{t^*}_{lk}c^{t^*}_{kj}-c^{t^*}_{lk}\tilde h^{t^*}_{kj}-c^{t^*}_{lk}c^{t^*}_{kj}\big)\\
&=(n-1)c | \tilde h |^2+\tfrac{n^3}{(n+2)^2} | \tilde h |^2 | H |^2+\tfrac{n^2-2n}{n+2}\sum_{ijlmt}\tilde h^{m^*}_{ij}
  \tilde h^{m^*}_{li}\tilde h^{t^*}_{lj}H^{t^*}\\
  &+\tfrac{n^2(n-2)}{(n+2)^2}\sum_{ijml}\tilde h^{m^*}_{ij}\tilde h^{m^*}_{li}H^{j^*}H^{l^*}-\sum_{i,j,k,m,l,t}\tilde h^{m^*}_{ij}\tilde h^{m^*}_{li}\tilde h^{t^*}_{lk}\tilde h^{t^*}_{kj},
 \end{aligned}
\end{equation}
where in the third equality we used the following identities derived by direct computations
\begin{eqnarray*}
n\sum_{i,j,k,m,l,t}\tilde h^{m^*}_{ij}\tilde h^{m^*}_{li}c^{t^*}_{lj}H^{t^*}=\frac{n^2}{n+2} | \tilde h |^2 | H |^2+\frac{2n^2}{n+2}\sum_{ijml}\tilde h^{m^*}_{ij}\tilde h^{m^*}_{li}H^{j^*}H^{l^*},
\end{eqnarray*}
\begin{eqnarray*}
\sum_{i,j,k,m,l,t}\tilde h^{m^*}_{ij}\tilde h^{m^*}_{li}\tilde h^{t^*}_{lk}c^{t^*}_{kj}=\sum_{i,j,k,m,l,t}\tilde h^{m^*}_{ij}\tilde h^{m^*}_{li}\tilde h^{t^*}_{kj}c^{t^*}_{lk}=\frac{2n}{n+2}\sum_{ijlmt}\tilde h^{m^*}_{ij}
  \tilde h^{m^*}_{li}\tilde h^{t^*}_{lj}H^{t^*},
\end{eqnarray*}
and since
\begin{equation*}
\begin{aligned}
\sum_tc^{t^*}_{lk}c^{t^*}_{jk}=&\frac{n^2}{(n+2)^2}\sum_t\Big((\mathfrak{S}_{t,l,k}
H^{t^*}\delta_{lk})(\mathfrak{S}_{t,j,k}H^{t^*}\delta_{jk})
\Big)\\
=&\frac{n^2}{(n+2)^2}\Big(\mathfrak{S}_{j,k,k,l}H^{l^*}H^{j^*}\delta_{kk}+2H^{l^*}H^{k^*}\delta_{jk}\\
&+2H^{j^*}H^{k^*}\delta_{kl}+|H|^2\delta_{jk}\delta_{kl}\Big),
\end{aligned}
\end{equation*}
thus
\begin{eqnarray*}
\sum_{i,j,k,m,l,t}\tilde h^{m^*}_{ij}\tilde h^{m^*}_{li}c^{t^*}_{lk}c^{t^*}_{kj}=\frac{2n^2}{(n+2)^2} | \tilde h |^2 | H |^2+\frac{(n+6)n^2}{(n+2)^2}\sum_{ijml}\tilde h^{m^*}_{ij}\tilde h^{m^*}_{li}H^{j^*}H^{l^*}.
\end{eqnarray*}
\begin{equation}\label{eqn:3.15}
\begin{aligned}
III=I=& c| \tilde h |^2+\tfrac{n^2}{(n+2)^2} | \tilde h |^2 | H |^2+\tfrac{2n}{n+2}\sum_{j,k,l,m,t}\tilde h^{m^*}_{jk}
  \tilde h^{m^*}_{kl}\tilde h^{t^*}_{lj}H^{t^*}\\
  &+\sum_{i,j,k,m,l,t}\tilde h^{m^*}_{ij}\tilde h^{m^*}_{kl}(\tilde h^{t^*}_{lj}\tilde h^{t^*}_{ik}-\tilde h^{t^*}_{lk}
  \tilde h^{t^*}_{ij})\\
  &+\tfrac{2n^2}{(n+2)^2}\sum_{i,j,k,m}\tilde h^{m^*}_{ij}\tilde h^{m^*}_{jk}H^{i^*}H^{k^*}.
 \end{aligned}
\end{equation}

Set $A_{i*}=(\tilde h^{i^*}_{jk})$. Then it follows from \eqref{eqn:3.9} and \eqref{eqn:3.12}-\eqref{eqn:3.15} that
\begin{equation}\label{eqn:3.16}
\begin{aligned}
\frac12\Delta | \tilde h |^2=&(n+2)\sum_{i,j,m}(\tilde h^{m^*}_{ij}T_{ij})_{,m}-{n^2}\sum_{i,j}\big[( H^{i^*}T_{ij})_{,j}-H^{i^*}T_{ij,j}\big]\\
&+|\nabla\tilde h|^2+(n+1) c| \tilde h |^2
  +\tfrac{n^2}{(n+2)} | \tilde h |^2 | H |^2\\
  &+\sum_{i,j}{\rm tr}(A_{i^*}A_{j^*}-A_{j^*}A_{i^*})^2-\sum_{i,j}({\rm tr}A_{i^*}A_{j^*})^2\\
  &+n\sum_{i,j,l,m,t}\tilde h^{m^*}_{ji}\tilde h^{m^*}_{jt}\tilde h^{l^*}_{ti}H^{l^*}
  +\tfrac{n^2}{(n+2)}\sum_{i,j,k,m}\tilde h^{m^*}_{ij}\tilde h^{m^*}_{jk}H^{i^*}H^{k^*}
 \end{aligned}
\end{equation}

In order to prove the theorem \ref{thm:1.1} and \ref{thm:1.2}, we need the following lemma.
\begin{lemma}[\cite{LiLi}]\label{lem:3.3}
Let $B_1,\ldots,B_m$ be symmetric $(n\times n)$-matrices $(m\geq2)$. Denote
$S_{mk}={\rm trace}(B^t_{m}B_{k})$, $S_{m}=S_{mm}=N(B_{m})$,
$S=\sum_{i=1}^mS_{i}$. Then
\begin{equation*}
\sum_{m,k}N(B_{m}B_{k}-B_{k}B_{m})+\sum_{m,k}S^2_{mk}\leq\frac32S^2.
\end{equation*}
\end{lemma}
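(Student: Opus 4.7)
The plan is to establish this purely algebraic inequality following the matrix-theoretic approach of An-Min Li and Jimin Li \cite{LiLi}. All quantities in the inequality---$N(B_m)$, $S_{mk}$, and $N(B_m B_k - B_k B_m)$---are invariant under simultaneous orthogonal conjugation $B_m \mapsto P B_m P^t$, so one is free to choose a preferred frame that simplifies the leading terms, and then carry out explicit computations combined with several sharp matrix inequalities.

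First, I would diagonalize $B_1$, writing $B_1 = \mathrm{diag}(\lambda_1, \ldots, \lambda_n)$. The commutator $[B_1, B_k]$ then has $(i,j)$-entry equal to $(\lambda_i - \lambda_j)(B_k)_{ij}$, giving $N([B_1, B_k]) = \sum_{i,j}(\lambda_i - \lambda_j)^2 (B_k)_{ij}^2$. For pairs $[B_k, B_l]$ with $k, l \geq 2$, one invokes the classical estimate $N([A, B]) \leq 2\, N(A)\, N(B)$ valid for any two symmetric matrices, which is obtained by diagonalizing $A$ and applying $(\lambda_i - \lambda_j)^2 \leq 2(\lambda_i^2 + \lambda_j^2)$ to the explicit expression for the commutator.

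Next, I would split each $B_k$ ($k \geq 2$) into its diagonal and off-diagonal parts, $B_k = D_k + F_k$, and expand both $N([B_m, B_k])$ and $S_{mk}^2$ in these components. The diagonal pieces contribute to the trace-squared terms and to $S$, while the off-diagonal entries populate the commutator norms. A careful term-by-term accounting, combined with the polarization inequality $2|ab| \leq a^2 + b^2$, yields a bound of the desired shape.

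The main obstacle is obtaining the sharp constant $\tfrac{3}{2}$: a naive combination of the above estimates only gives the weaker constant $2$. Recovering the factor $\tfrac{3}{2}$ requires a delicate weighted Cauchy-Schwarz argument that balances the commutator norms against the trace-squared terms, with equality attained by a one-dimensional family $B_m = c_m B$ of mutually proportional symmetric matrices. This optimal balance is the crux of the Li-Li argument, and once secured the inequality follows by assembling the pieces and summing over all pairs $(m,k)$.
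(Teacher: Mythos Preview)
The paper does not prove this lemma; it is quoted verbatim from \cite{LiLi} and used as a black box in the subsequent estimate. There is therefore no proof in the paper to compare your proposal against.

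Turning to your outline on its own merits, there is a concrete error that indicates the argument is not yet in hand: the equality case you claim is wrong. If $B_m = c_m B$ for a fixed symmetric $B$, then every commutator vanishes and
\[
\sum_{m,k} S_{mk}^2=\Big(\sum_m c_m^2\Big)^2 N(B)^2=S^2<\tfrac32 S^2,
\]
so proportional families are \emph{not} extremal. The true extremizers (after an orthogonal rotation of the index set diagonalizing $(S_{mk})$) consist, up to scale and simultaneous conjugation, of exactly two nonzero matrices supported in a common $2\times 2$ block, namely $\bigl(\begin{smallmatrix}1&0\\0&-1\end{smallmatrix}\bigr)$ and $\bigl(\begin{smallmatrix}0&1\\1&0\end{smallmatrix}\bigr)$; a direct check gives $16+8=\tfrac32\cdot 16$. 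Since you present the passage from the naive constant $2$ to the sharp constant $\tfrac32$ as ``the crux'' guided by the equality case, and your equality case is mistaken, the proposal does not yet contain the decisive step.

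The actual Li--Li argument also starts differently from what you describe: the first reduction is an orthogonal rotation in the index $m$ (not diagonalization of $B_1$) so that $S_{mk}=S_m\delta_{mk}$. This converts $\sum_{m,k}S_{mk}^2$ into $\sum_m S_m^2$ and, crucially, imposes the orthogonality constraints $\mathrm{tr}(B_m B_k)=0$ for $m\neq k$, which are then used when bounding the commutator terms after diagonalizing one $B_m$ at a time.
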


Now we are prepared to estimate the right hand side of (\ref{eqn:3.16}), mainly the last two terms on the last line of (\ref{eqn:3.16}).

 By re-choosing $\{e_i\}_{i=1}^n$ such that $\sum_l\tilde h^{l^*}_{ij}H^{l^*}=\lambda_i\delta_{ij}$, denoting $$S_{H}=\sum_{j,i}(\sum_l\tilde h^{l^*}_{ji}H^{l^*})^2=\sum_{j}\lambda^2_j,\ \ S_{i^*}=\sum_{j,l}(\tilde h^{i^*}_{jl})^2,$$  and $S=\sum_iS_{i^*}$.
Then by using lemma \ref{lem:3.3} and the above fact, we have the following estimation for the right hand side of (\ref{eqn:3.16})
\begin{equation}\label{eqn:3.17}
\begin{aligned}
\frac12\Delta | \tilde h |^2\geq&(n+2)\sum_{i,j,m}(\tilde h^{m^*}_{ij}T_{ij})_{,m}-{n^2}\sum_{i,j}\big[( H^{i^*}T_{ij})_{,j}-H^{i^*}T_{ij,j}\big]\\
&+|\nabla\tilde h|^2+(n+1)c | \tilde h |^2
  +\tfrac{n^2}{(n+2)} | \tilde h |^2 | H |^2-\tfrac32S^2\\
  &+n\sum_i\lambda_iS_{i^*}+\tfrac{n^2}{n+2}\sum_i\lambda^2_{i}\\
  \geq&(n+2)\sum_{i,j,m}(\tilde h^{m^*}_{ij}T_{ij})_{,m}-{n^2}\sum_{i,j}\big[( H^{i^*}T_{ij})_{,j}-H^{i^*}T_{ij,j}\big]\\
&+|\nabla\tilde h|^2+(n+1)c | \tilde h |^2
  +\tfrac{n^2}{(n+2)} | \tilde h |^2 | H |^2-\tfrac32S^2\\
  &+\tfrac{n}2\sum_i(\lambda_i+S_{i^*})^2-\tfrac{n}2\sum_iS^2_{i^*}\\
  \geq&(n+2)\sum_{i,j,m}(\tilde h^{m^*}_{ij}T_{ij})_{,m}-{n^2}\sum_{i,j}\big[( H^{i^*}T_{ij})_{,j}-H^{i^*}T_{ij,j}\big]\\
&+|\nabla\tilde h|^2+(n+1)c | \tilde h |^2
  +\tfrac{n^2}{(n+2)} | \tilde h |^2 | H |^2-\tfrac{n+3}2S^2\\
  &+\tfrac{n}2\sum_i(|H|\lambda_i+S_{i^*})^2,\\
 \end{aligned}
\end{equation}
where, we have used $S^2=(\sum_iS_{i^*})^2\geq\sum_iS_{i^*}^2$.

Since $M^n$ is compact and $\nabla^*T=0$, we have
\begin{equation*}
 0\geq\int_{M^n}|\tilde h|^2\Big[(n+1)c
  +\tfrac{n^2}{(n+2)} | H |^2-\tfrac{n+3}2|\tilde h|^2\Big]d\nu.
\end{equation*}
Therefore, equality holds in the above inequality implies $M^n$ is totally geodesic or $\tilde h=0$(here we used the fact that equality in lemma \ref{lem:3.3} can not be achieved except $M^n$ is totally geodesic if $n\geq3$, by Li and Li \cite{LiLi}). Then, from classification results of
 Ros and Urbano \cite{RU} and Chen \cite{Ch} we have one of the following alternatives holds:
\begin{enumerate}
 \item $c=0$, $M^n$ is the Whitney spheres $\phi_{r,A}$;
 \item $c=1$,   $M^n$ is either totally geodesic or the Whitney spheres $\phi_\theta$.
\end{enumerate}
This completes the proof of theorem \ref{thm:1.1}.
\\
\\

When $n=2$,  we obtain from \eqref{eqn:3.12} that

\begin{equation}\label{eqn:3.18}
\begin{aligned}
0=& \int_M|\nabla \tilde h|^2+3K|\tilde h|^2d\nu
\\&\geq \int_M3K|\tilde h|^2d\nu
\\&=\int_M\tfrac32(2c+|H|^2-|\tilde h|^2)|\tilde h|^2d\nu,
\end{aligned}
\end{equation}
where $K$ is the Gauss curvature of $M$. Here the last equality holds since by \eqref{eqn:2.6} and \eqref{eqn:3.6}  we have
\begin{eqnarray*}
K&=&c+detA_{1^*}+detA_{2^*}
\\&=&c+\frac{1}{2}(4|H^{1^*}|^2-|A_{1^*}|^2)+\frac{1}{2}(4|H^{2^*}|^2-|A_{2^*}|^2)
\\&=&c+\frac{1}{2}(4|H|^2-|h|^2)
\\&=&c+\frac{1}{2}(|H|^2-|\tilde{h}|^2).
\end{eqnarray*}
 Therefore, equality  holds in \eqref{eqn:3.18} if and only if $\tilde h=0$ or
$\nabla\tilde h=0$ and $K=0$.
\\

\textbf{Case 1.} $\tilde{h}=0$.  From classification results of
Carstro and Urbano \cite{CU} and Chen\cite{Ch}, we have one of the following alternatives holds:
\begin{enumerate}
 \item $c=0$, $\Sigma$ is the (2-dimensional)Whitney spheres $\phi_{r,A}$;
 \item $c=1$,   $\Sigma$ is either totally geodesic or the (2-dimensional)Whitney spheres $\phi_\theta$.
\end{enumerate}

\textbf{Case 2.} $\nabla\tilde h=0$ and $K=0$. Since $\nabla^*T=0$, it is easy to see that
\begin{equation*}
\begin{aligned}
0=&\tfrac{1}{4}(2\sum_jH_{,jj}^{i^*}-\sum_{k,j}H_{,kj}^{k^*}g_{ij})\\
=&\tfrac{1}{4}(H^{i^*}_{,jj}+KH^{i^*}).
\end{aligned}
\end{equation*}
Then,
\begin{equation}\label{eqn:3.19}
0=\tfrac12\int_M\Delta |H|^2d\nu=\int_M|\nabla JH|^2-K|H|^2d\nu=\int_M|\nabla JH|^2d\nu,
\end{equation}
which implies that $\nabla JH=0$. Therefore  $\Sigma$ is flat Lagrangian surface with $\nabla\tilde h=0$ and $\nabla JH=0$.

If $c=1$,  from proposition 3.3 of \cite{LS} for the nonminimal case and the discussion on page 853 of \cite{Ha} for minimal case, we see that  $\Sigma$ is a flat Calabi torus stated in the appendix
of \cite{LS}.

If $c=0$, from proposition 3 and remark 6 of \cite{CU} we see that $\Sigma$ is a Lagrangian immersion given by $\phi_{0,\alpha}(\alpha\in(0,\pi])$ , defined in \cite{CU}.

 This completes the proof of theorem \ref{thm:1.2}.

\section{Lagrangian surfaces in $\mathbb{C}^n$ satisfying $\nabla^*\nabla^*T=0$}
In this section we study Lagrangian surfaces in $\mathbb{C}^n$ satisfying equation $\nabla^*\nabla^*T=0$, as well as a flow method related with them. The aim of this section is to prove theorem \ref{main thm3} and theorem \ref{flow}.
\subsection{Proof of Theorem \ref{main thm3}}
 From the proof of theorem 1.1 in \cite{Zh}, we see that for any cut off function $\gamma\in C^1_c(\Sigma)$, with $|\nabla\gamma|\leq\frac{C_0}{R}$, if $\epsilon_0$ is small enough, we have
\begin{eqnarray}
\int_\Sigma(|\nabla\tilde{h}|^2+|H|^2|\tilde{h}|^2)\gamma^2d\nu\leq C\int_\Sigma\langle\nabla^*T, H\lrcorner \omega\rangle\gamma^2d\nu+\frac{C}{R^2}\int_{\gamma>0}|h|^2d\nu.
\end{eqnarray}
Since $\Sigma$ is compact, when $R$ is large enough we have $\gamma\equiv1$ on $\Sigma$, therefore for $R$ sufficiently large
$$\int_\Sigma(|\nabla\tilde{h}|^2+|H|^2|\tilde{h}|^2)d\nu\leq C\int_\Sigma\langle\nabla^*T, H\lrcorner \omega\rangle d\nu+\frac{C}{R^2}\int_\Sigma|h|^2d\nu.$$
Now because $\Sigma$ is topologically a sphere and $H\lrcorner\omega$ is a closed one form on $\Sigma$ by Dazord \cite{Da}, there exists a smooth function $f$ on $\Sigma$ such that $H\lrcorner \omega=df$. Hence
$$\int_\Sigma\langle\nabla^*T, H\lrcorner \omega\rangle d\nu=\int_\Sigma\langle\nabla^*\nabla^*T, f\rangle d\nu=0.$$
We obtain
$$\int_\Sigma(|\nabla\tilde{h}|^2+|H|^2|\tilde{h}|^2)d\nu\leq\frac{C}{R^2}\int_\Sigma|h|^2d\nu.$$
Letting $R\to \infty$, we get $\tilde{h}=0$, and therefore $\Sigma$ is the Whitney sphere, by Castro and Urbano \cite{CU}.

\subsection{Proof of Theorem \ref{flow}}The proof of theorem \ref{flow} could be completed by following the argument used in  proof of the well posedness of the HW flow in \cite{LW}. For the convenience of readers we gives some details here. Note that $$T_{ij}=\frac{1}{n+2}(nH_{,j}^{i^*}-\sum_kH_{,k}^{k^*}g_{ij}),$$ hence \begin{eqnarray*}
(\nabla^*T)_i&=&\frac{1}{n+2}(n\sum_jH_{,jj}^{i^*}-\sum_{k,j}H_{,kj}^{k^*}g_{ij})
\\&=&\frac{1}{n+2}(n\sum_jH_{,jj}^{i^*}-\sum_{k,j}(H_{,jk}^{k^*}g_{ij}-\rm Ric^j_{\ k}H^{k^*}g_{ij}))
\\&=&\frac{1}{n+2}(n\sum_jH_{,jj}^{i^*}-\sum_{k,j}H_{,kk}^{j^*}g_{ij}+\sum_{k,j}\rm Ric_{ik}H^{k^*})
\\&=&\frac{n-1}{n+2}\sum_jH_{,jj}^{i^*}+\sum_{k,j}\frac{1}{n+2}\rm Ric_{ik}H^{k^*},
\end{eqnarray*}
where in the second equality we used Ricci formula to switch the second order covariant derivatives of $H$ and in the last equality we used the symmetry of $H^{k^*}_{,j}$ by (\ref{eqn:12}). Therefore we have
$$\nabla^*T=\frac{n-1}{n+2}(\Delta^\bot H)\lrcorner\omega+\frac{1}{n+2}\rm Ric(JH),$$
 and
$$\nabla^*\nabla^*T=\frac{n-1}{n+2}\rm{div}\Delta(JH)+\frac{1}{n+2}\nabla^*Ric(JH).$$
Then if $M^n_t$ is a graph, i.e. there exists $\varphi_t\in C^\infty(M^n)$ such that
$$M^n_t=(x_1,x_2,...,x_n,\partial_1\varphi_t,\partial_2\varphi_t,...,\partial_2\varphi_t),$$ where $(x_1,x_2,...,x_n)$ are local coordinates on $M^n$ and $\partial_i\varphi_t=\frac{\partial\varphi_t}{\partial x_i}$, then the leading term of $\nabla^*\nabla^*T$ for $M^n_t$ is $\frac{n-1}{n+2}\Delta^3\varphi_t$ (see the appendix for some details). Then by similar discussion with that of \cite{LW}, on page 216, the flow equation \eqref{DCMF} is reduced to a parabolic scalar equation (see the appendix for some details)
\begin{eqnarray*}
\frac{d}{dt}\varphi_t=\frac{n-1}{n+2}\Delta^3\varphi_t+\ lower \ order\ terms.
\end{eqnarray*}

In general, by the argument in \cite{LW}, the flow equation \eqref{DCMF} can be seen as a flow equation for Lagrangian surfaces in the cotangent bundle of the initial surface, by using Weinstein's tubular neighborhood theorem. Then the flow equation can be seem as a parabolic scalar equation in a short time interval, similar with the graph case discussed above. At last the well posedness of the flow follows  from a general theorem of Huisken and Polden \cite{HP} for the well posedness of scalar parabolic partial differential equations on a Riemannian manifold.

\section{Appendix}
In this appendix we discuss the equation for Lagrangian graphs in $\mathbb{C}^n$ satisfying $\nabla^*\nabla^*T=0$. Note that we have shown in the last section that
\begin{eqnarray}
\nabla^*\nabla^*T=\frac{n-1}{n+2}\rm div\Delta(JH)+\frac{1}{n+2}\nabla^*Ric(JH).
\end{eqnarray}
Assume that $l$ is a Lagrangian graph, that is there is a function $\varphi:\mathbb{R}^n{\to} \mathbb{R}$ such that
 $$l(x_1,x_2,...,x_n)=(x_1,x_2,...,x_n,\partial_1\varphi,\partial_2\varphi,...,\partial_n\varphi).$$

Assume that $\theta$ is the Lagrangian angle of $l$, then it is well known that \cite{Mor}
\begin{eqnarray}
J\nabla\theta=H,
\end{eqnarray}
and since by Ricci formula
\begin{eqnarray*}
\Delta\nabla\theta=\nabla\Delta\theta+\rm \sum_{j,k}Ric^k_{\ j}\theta_ke_j.
\end{eqnarray*}
We have
\begin{eqnarray*}
\rm div\Delta(JH)&=&\rm -div\Delta\nabla\theta
\\&=&-\Delta^2\theta-\rm div \sum_{j,k}Ric^k_{\ j}\theta_ke_j.
\end{eqnarray*}
Therefore
\begin{eqnarray}
\nabla^*\nabla^*T=-\frac{n-1}{n+2}\Delta^2\theta-\frac{2}{n+2}\rm div \sum_{j,k}Ric^k_{\ j}\theta_ke_j.
\end{eqnarray}
Note that $\rm div \sum_{j,k}Ric^k_{\ j}\theta_ke_j$ contains at highest fourth order terms of $\varphi$ and hence the leading term of $\nabla^*\nabla^*T$ is contained in $-\frac{n-1}{n+2}\Delta^2\theta$, which we will compute in the following.

Note that by definition the Lagrangian angle, now globally defined, is
\begin{eqnarray}
e^{i\theta}=*( dz_1\wedge dz_2\wedge\cdot\cdot\cdot\wedge dz_n)\mid_{l(\mathbb{R}^n)},
\end{eqnarray}
where $*$ is the Hodge star operator with respect to the metric $g_l$.
Thus we have
\begin{eqnarray*}
e^{i\theta}&=&*(dx_1+id\varphi_1)\wedge(dx_2+id\varphi_2)\wedge\cdot\cdot\cdot\wedge(dx_n+id\varphi_n)\\
&=& \frac{1}{\sqrt{\det(g_l)}} \det (I_n+i D^2_0 \varphi),
\end{eqnarray*}
where $\Delta_0=\frac{\partial^2}{\partial x_1^2}+\frac{\partial^2}{\partial x_2^2}+\cdot\cdot\cdot+\frac{\partial^2}{\partial x_n^2}$ is the standard Laplacian  on $\mathbb{R}^n$, $D^2_0\varphi$ is the Hessian of $\varphi$
and $I_n$ is the $n\times n$ identity matrix.

A direct computation gives
\[\det(g_l)=\det(I_n+(D^2_0\varphi)^2).\]
Therefore we have
\begin{equation}\label{add_c}
\theta=-i \log \frac{\det (I_n+i D^2_0 \varphi)}{\sqrt{\det(I_n+(D^2_0\varphi)^2)}}
\end{equation}

Let \begin{eqnarray*}
&&A=(A_{ij})=I_n+i D^2_0\varphi,\ B=(B_{ij})=g_l=I_n+(D^2_0\varphi)^2,
\\ &&A^{-1}=(A^{ij}), \ B^{-1}=(B^{ij})=g_l^{-1}.
\end{eqnarray*}
The matrices $A$ and $B$ have the following relation
\[B=(I_n-iD^2_0 \varphi )A,\quad A^{-1}=B^{-1}  (I_n-iD^2_0 \varphi).\]
Using the above relation, we have (following \cite{CCY})
\[
\begin{array}{rcl}
\theta_k  &=&  -i\sum_{i,j} (A^{ij} A_{ij,k}- \frac12  B^{ij}  B_{ij,k} )\\
&=& -i\sum_{i,j} (B^{il}(\delta_{lj}-i\varphi_{lj}) \cdot i \varphi_{ijk}-\frac 12 B^{ij} \cdot 2 \varphi_{il}\varphi_{ljk})\\
&=&\sum_{i,j} B^{ij}\varphi_{ijk}
\\&=&\sum_{i,j}g^{ij}\varphi_{ijk}.
\end{array}
\]
Therefore
\begin{eqnarray}
\Delta\theta=\Delta^2\varphi+lot_1,
\end{eqnarray}
where $lot_1$ contains at highest third order terms of $\varphi$ and hence
\begin{eqnarray}
-\Delta^2\theta=-\Delta^3\varphi+lot_2,
\end{eqnarray}
where $lot_2$ contains at highest fifth order terms of $\varphi$. In conclusion,
\begin{proposition}
Assume that $l: \mathbb{R}^n\to \mathbb{C}^n(n\geq2)$ is a Lagrangian graph with $l(x_1,x_2,\cdot\cdot\cdot,x_n)=(x_1,x_2,\cdot\cdot\cdot,x_n,\partial_1\varphi,\partial_2\varphi,\cdot\cdot\cdot,\partial_n\varphi)$ satisfying $\nabla^*\nabla^*T=0$. Then $l$ satisfies the following sixth order quasi-linear elliptic equation of $\varphi$
\begin{eqnarray}
\nabla^*\nabla^*T=-\frac{n-1}{n+2}\Delta^2\theta-\frac{2}{n+2}\rm div  \sum_{j,k}Ric^k_{\ j}\theta_ke_j=0,
\end{eqnarray}
where $\theta$ is the Lagrangian angle of $l$. In particular, $l$ satisfies the following equation
$$\nabla^*\nabla^*T=-\frac{n-1}{n+2}\Delta^3\varphi+lot_3=0,$$
where $lot_3$ contains at highest fifth order terms of $\varphi$.
\end{proposition}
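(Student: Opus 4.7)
The starting point is the identity
\[\nabla^*\nabla^*T=\tfrac{n-1}{n+2}\mathrm{div}\,\Delta(JH)+\tfrac{1}{n+2}\nabla^*(\mathrm{Ric}(JH))\]
established in the previous subsection. The plan is to substitute the Lagrangian angle identity $J\nabla\theta=H$ to rewrite both terms as a scalar differential operator acting on $\theta$, then use the Calabi--Yau determinant formula for a Lagrangian graph to express $\theta$, and ultimately $\nabla^*\nabla^*T$, directly in terms of $\varphi$.

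For the first step I would exploit $JH=-\nabla\theta$, which follows from $J\nabla\theta=H$ by applying $J$ to both sides, together with the Bochner/Weitzenbock identity
\[\Delta\nabla\theta=\nabla\Delta\theta+\mathrm{Ric}(\nabla\theta)\]
already recorded in this section. Taking one further divergence gives $\mathrm{div}\,\Delta(JH)=-\Delta^2\theta-\mathrm{div}\,\mathrm{Ric}(\nabla\theta)$. Combining with the $\nabla^*(\mathrm{Ric}(JH))$ term, collecting like terms and carefully tracking the signs of $\nabla^*$ acting on a vector field (dualized via the metric) yields the first displayed equation of the proposition, with the coefficient $-\tfrac{2}{n+2}$ in front of $\mathrm{div}\,\mathrm{Ric}(\nabla\theta)$.

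For the second step I would compute $\theta$ explicitly for the graph. Since $e^{i\theta}=\ast(dz_1\wedge\cdots\wedge dz_n)|_l$ and the induced metric is $g_l=I_n+(D_0^2\varphi)^2$, direct expansion of the pullback gives
\[e^{i\theta}=\frac{\det(I_n+iD_0^2\varphi)}{\sqrt{\det(I_n+(D_0^2\varphi)^2)}}.\]
Setting $A:=I_n+iD_0^2\varphi$ and $B:=g_l=(I_n-iD_0^2\varphi)A$, this factorization implies $A^{-1}=B^{-1}(I_n-iD_0^2\varphi)$. Logarithmic differentiation of the determinant formula, combined with this factorization, causes the imaginary contributions from $\det A$ and $\sqrt{\det B}$ to cancel and leaves the clean identity $\theta_k=g_l^{ij}\varphi_{ijk}$.

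The final step is an order count. Because $g_l^{ij}=\delta^{ij}+O(|D_0^2\varphi|^2)$ depends smoothly on $D_0^2\varphi$, the identity $\theta_k=g_l^{ij}\varphi_{ijk}$ shows $\theta_k$ is of order three in $\varphi$ with principal symbol $\sum_i\varphi_{iik}$; consequently $\Delta\theta=\Delta_0^2\varphi+(\text{terms of order }\le 3)$, and applying the Laplacian once more yields $\Delta^2\theta=\Delta_0^3\varphi+(\text{terms of order }\le 5)$. The Ricci tensor of $g_l$ is a fourth-order function of $\varphi$ via the Gauss equation, and $\nabla\theta$ already carries three derivatives, so $\mathrm{div}\,\mathrm{Ric}(\nabla\theta)$ has total order at most five and is absorbed into $\mathit{lot}_3$. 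Since $\tfrac{n-1}{n+2}\neq 0$ for $n\ge 2$, the equation $\nabla^*\nabla^*T=0$ is a genuine sixth-order quasilinear elliptic PDE for $\varphi$. The main bookkeeping obstacle is the logarithmic-derivative identity producing $\theta_k=g_l^{ij}\varphi_{ijk}$; once that identity is in hand, the remaining steps reduce to chain-rule accounting and the order analysis just described.
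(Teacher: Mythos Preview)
Your proposal is correct and follows essentially the same route as the paper's appendix: you start from the identity $\nabla^*\nabla^*T=\tfrac{n-1}{n+2}\mathrm{div}\,\Delta(JH)+\tfrac{1}{n+2}\nabla^*(\mathrm{Ric}(JH))$, feed in $JH=-\nabla\theta$ and the Bochner formula to obtain the $\theta$-equation, then use the Calabi--Yau determinant expression together with the factorization $A^{-1}=B^{-1}(I_n-iD_0^2\varphi)$ to derive $\theta_k=g_l^{ij}\varphi_{ijk}$ and finish by order counting. Your bound of order five for $\mathrm{div}\,\mathrm{Ric}(\nabla\theta)$ is in fact slightly more careful than the paper's stated ``fourth order,'' but either estimate suffices for the conclusion.
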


\end{document}